 \newtheorem{remark}{Remark}
 \newtheorem{lemma}[remark]{Lemma}
 \newtheorem{theorem}[remark]{Theorem}
 \newtheorem{proposition}[remark]{Proposition}
 \newtheorem{corollary}[remark]{Corollary}
\title{Coloring, location and domination of corona graphs}
\author{I. Gonz\'alez Yero$^{\S}$, D. Kuziak$^{\ddag}$ and A. Rond\'on Aguilar$^{\dag}$\\
\\
$^{\S}${\small Departamento de Matem\'aticas, Escuela Polit\'ecnica Superior de Algeciras}\\
{\small Universidad de C\'adiz,} {\small
Av. Ram\'on Puyol, s/n, 11202 Algeciras, Spain.} \\ {\small
ismael.gonzalez\@@uca.es}\\
$^{\ddag}${\small Faculty of Applied Physics and  Mathematics}\\
{\small Gda\'nsk University of Technology,} {\small ul. Narutowicza
11/12 80-233 Gda\'nsk, Poland.} \\ {\small dkuziak\@@mif.pg.gda.pl}\\
{\small $^{\dag}$Filial Universitaria Municipal Media Luna, Universidad de Granma,}\\
{\small Carretera de Manzanillo Km. 1, 87700 Media Luna, Granma, Cuba.} \\
{\small arondona\@@udg.co.cu}}
\begin{document}

\maketitle

\begin{abstract}
A vertex coloring of a graph $G$ is an assignment of colors to
the vertices of  $G$ such that every two adjacent vertices of
$G$ have different colors. A coloring related property of a graphs is also an assignment of colors or labels to the vertices of a graph, in which the process of labeling is done according to an extra condition. A set $S$ of vertices of a graph $G$ is a dominating set in $G$ if
every vertex outside of $S$ is adjacent to at least one vertex
belonging to $S$. A domination parameter of $G$ is related to those
structures of a graph satisfying some domination property together
with other conditions on the vertices of $G$. In this article we study several mathematical properties related to coloring, domination and location of corona graphs.

We investigate the distance-$k$ colorings of corona graphs.
Particularly, we obtain  tight bounds for the distance-$2$ chromatic
number and distance-$3$ chromatic number of corona graphs, throughout some
relationships between the distance-$k$ chromatic number of corona
graphs and the distance-$k$ chromatic number of its factors.
Moreover, we give the exact value of the distance-$k$ chromatic
number of the corona of a path and an arbitrary graph. On the other hand, we
obtain bounds for the Roman dominating number and the locating-domination number of corona graphs. We give
closed formulaes for the $k$-domination number, the distance-$k$
domination number, the independence domination number, the domatic
number and the idomatic number of corona graphs.
\end{abstract}

{\it Keywords:} Coloring; domination; location; Roman domination; corona graphs.

{\it AMS Subject Classification Numbers:}  05C12; 05C76.

\section{Introduction}

Nowadays the studies about the behavior of several graph parameters
in product graphs have become into an interesting topic of research
in graph theory. For instance, is it well known the Hedetniemi's
coloring conjecture \cite{hedet-conj-first,hedet-conjec-survey} for
the categorical product (or direct product), which states that the chromatic number of
categorial product graphs is equal to the minimum value between the
chromatic numbers of its factors. Also, one of the oldest open
problems in domination in graphs is related with product graphs. The
problem was presented first by Vizing \cite{vizing1} in 1963. After
that he pointed out as a conjecture in \cite{vizing}. The conjecture
states that the domination number of Cartesian product graphs is
greater than or equal to the product of the domination numbers of
its factors.

A graph labeling is an assignment of labels, traditionally
represented by integers or  colors, to the edges or vertices, or
both, of a graph. Formally, given a graph $G$, a vertex labeling is
a function mapping vertices of $G$ to a set of labels. One of the
most popular graphs labeling is the graph coloring, which is an
assignment of colors to the vertices or edges, or both, of a graph.
For instance, given a set of colors $C=\{c_1,c_2,...,c_r\}$, a
vertex coloring of a graph $G=(V,E)$ is a map $c: V \rightarrow C$
such that for every two adjacent vertices $u,v\in V$ it follows
$c(u)\ne c(v)$. The minimum value $r=|C|$ for which $G$ has a vertex
coloring is called the {\em chromatic number} of $G$ and it is
denoted by $\chi(G)$. Nowadays, there are several kinds of
investigations related to vertex colorings of graphs (for example
\cite{bollobas,chartrand,lawler}).

Coloring problems in graphs have been related to several number of
scheduling problems \cite{color-aplic-2}. For instance, the
scheduling problem of assigning aircrafts to flights, the assignments
of tasks to time slots or assigning frequency channels to different
wireless applications \cite{dist-k-color-3}. Moreover, graph
colorings can be applied to register allocation
\cite{color-aplic-1}, pattern matching or some recreational games
like the well known puzzles called Sudoku. On the other hand, the
chromatic number has been related with several parameters of
graphs, and as a consequence, there exists now different types of vertex colorings such as
list coloring, total coloring, acyclic coloring, distance-$k$
coloring, etc.

A set $S$ of vertices of a graph $G$ is an {\em independent set} of
$G$ if for every $v\in S$ it is satisfied that $\delta_S(v)=0$. The
minimum cardinality of any independent set in $G$ is called the {\em
independence number} and it is denoted by $\beta_0(G)$. Also, a set
$S$ is a $t$-{\em dependent set} in $G$, if for every vertex $v\in
S$ it follows that $\delta_{S}(v)\le k$. Similarly, the minimum
cardinality of any $k$-dependent set in $G$ is the $k$-{\em
dependence number} and it is denoted by $\beta_k(G)$.

The set of vertices $D\subset V$ is a {\em dominating set} if for
every vertex $v\in \overline{D}$ it is satisfied that
$\delta_D(v)\ge 1$ \cite{bookdom1}. The minimum cardinality of any
dominating of $G$ is the {\em domination number} of $G$ and it is
denoted by $\gamma(G)$. Moreover, the set $D$ is $k$-{\em
dominating}, $k\ge 2$, if for every vertex $v\in \overline{D}$, it
follows $\delta_S(v)\ge k$. The minimum cardinality of any
$k$-dominating set in $G$ is the $k$-{\em domination number} and it
is denoted by $\gamma_k(G)$. The concept of domination has been
related with several structures of the graph, which has led to
different kind of domination parameters associated to some extra
conditions. In this sense, some of the most popular cases are the
independent dominating sets, connected dominating sets, convex
dominating sets, distance-$k$ dominating sets, domatic partitions,
etc. For general notation and terminology in domination we follow the
books \cite{bookdom1,bookdom2}.

We begin by establishing the principal terminology and notation
which we will use throughout the article. Hereafter $G=(V,E)$
represents a undirected finite graph without loops and multiple
edges with set of vertices $V$ and set of edges $E$. The order of
$G$ is $|V|=n(G)$ and the size $|E|=m(G)$ (If there is no ambiguity
we will use only $n$ and $m$). We denote two adjacent vertices
$u,v\in V$ by $u\sim v$ and in this case we say that $uv$ is an edge
of $G$ or $uv\in E$. For a nonempty set $X\subseteq V$, and a vertex
$v\in V$, $N_X(v)$ denotes the set of neighbors that $v$ has in $X$:
$N_X(v):=\{u\in X: u\sim v\}$ and the degree of $v$ in $X$ is
denoted by $\delta_{X}(v)=|N_{X}(v)|.$ In the case $X=V$ we will use
only $N(v)$, which is also called the open neighborhood of a vertex
$v\in V$, and $\delta(v)$ to denote the degree of $v$ in $G$. The
close neighborhood of a vertex $v\in V$ is $N[v]=N(v)\cup \{v\}$.
The minimum and maximum degrees of $G$ are denoted by $\delta$ and
$\Delta$, respectively. The subgraph induced by $S\subset V$ is
denoted by $\langle S\rangle $ and the complement of the set $S$ in
$V$ is denoted by $\overline{S}$. The distance
between two vertices $u,v\in V$ of $G$ is denoted  by $d_G(u,v)$ (or
$d(u,v)$ if there is no ambiguity). The diameter of a graph is the
maximum of the distances between any two vertices of $G$ and it is
denoted by $D(G)$. Given a vertex $v$ of $G$, we denote by $M_t[v]$ the set of vertices of $G$ whose distance to the vertex $v$ is less than or equal to $t$, {\em i.e.} $M_t[v]=\{u\in V\;:\;d(u,v)\le t\}$.  Throughout the article, given
the set of colors $C$, we will refer to the map $c: V\rightarrow C$
as a distance-$k$ coloring of vertices of $G$.

The corona product graph (corona graph, for short) of two graphs
was introduced first by Frucht and Harary in \cite{corona-first}.
After that many works have been devoted to study its structure and
to obtain some relationships between the corona graph and its
factors \cite{corona-spectrum,corona-banwidth,corona-first,corona-profile,corona-sum}.

Let $G$ and $H$ be two graphs of order $n_1$ and $n_2$,
respectively. The corona graph $G\odot H$ is defined as the graph obtained
from $G$ and $H$ by taking one copy of $G$ and $n_1$ copies of $H$
and joining by an edge each vertex from the $i^{th}$-copy of $H$
with the $i^{th}$-vertex of $G$. Hereafter, we will denote by
$V=\{v_1,v_2,...,v_n\}$   the set of vertices of $G$ and by
$H_i=(V_i,E_i)$  the $i^{th}$ copy of $H$ in $G\odot H$.

\section{Distance-$k$ coloring}

A distance-$k$ coloring of a graph $G$ is an assignment of colors to
the vertices of  $G$ such that every two different vertices $a,b$ of
$G$ have different colors if the distance between $a$ and $b$ is
less than or equal to $k$
\cite{dist-k-color-1,dist-k-color-2,dist-k-color-3}. The minimum
number of colors in a distance-$k$ coloring of $G$ is the {\em
distance-$k$ chromatic number} of $G$ and it is denoted by
$\chi_{\le k}(G)$. Notice that the case $k=1$ corresponds to the
standard well known vertex coloring.

We begin this section by presenting the following almost
straightforward result relative to the distance-$1$ chromatic number
({\em i.e.} the standard chromatic number) of any corona graph
$G\odot H$ and further we will analyze the distance-$k$ chromatic
number of $G\odot H$, with $2\le k\le D(G\odot H)$.

\begin{remark}
For any graphs $G$ and $H$,
$$\chi(G\odot H)=\max\{\chi(G),\chi(H)+1\}.$$
\end{remark}

\begin{proof}
Since every vertex $u\in V_i$ is adjacent to $v_i\in V$ we obtain
that $\chi(G\odot H)\ge \chi(H)+1$.  Also, it is clear that
$\chi(G\odot H)\ge \chi(G)$.

On the other hand, let $t=\max\{\chi(G),\chi(H)+1\}$ and let us
color the vertices $v_i$, $i\in \{1,...,n_1\}$,  of $G$ by using $t$
different colors $c_1,c_2,...,c_t$. Now, if $c(v_i)=c_i$, then the
copy $H_i$ of $H$ can be colored by using the set of colors
$c_1,c_2,...,c_{i-1},c_{i+1},...,c_t$. Therefore, we obtain that
$\chi(G\odot $H$)\le t=\max\{\chi(G),\chi(H)+1\}$ and the result
follows.
\end{proof}

\begin{remark}
Let $G$ be a graph of order $n$ and let $k\ge 1$ be an integer. Then $\chi_{\le k}(G)=n$ if and
only if $D(G)\le k$.
\end{remark}

\begin{proof}
If $D(G)\le k$, then for every two different vertices $u,v$  of $G$
we have that $d(u,v)\le D(G)\le k$. So, $u$ and $v$ have different colors
in any distance-$k$ coloring of $G$ and, as a consequence,
$\chi_{\le k}(G)=n$.

On the other side, let $G$ be a graph such that $\chi_{\le k}(G)=n$.
Let us suppose $D(G)>k$ and let $a,b$ be two different vertices of
$G$ such that $d(a,b)=D(G)$. Let $C=\{c_1,c_2,...,c_{n-1}\}$ be a
set of $n-1$ colors. Hence, we can color the set of vertices of
$G-\{b\}$ with the $n-1$ colors in $C$. Now, as $k<D(G)=d(a,b)$ we
can color vertex $b$ by using the color of vertex $a$. Thus
$G$ can be colored with $n-1$ colors, which is a contradiction
Therefore, we have that $D(G)\le k$.
\end{proof}

As a consequence of the above remark, from now on we will
focus on the cases $2\le k\le D(G)-1$. Also, as for every
graph $G$ and $H$ we have that $D(G\odot H)=D(G)+2$ we are
interested here in obtaining the distance-$k$ chromatic number of
corona graphs $G\odot H$ for $2\le k\le D(G)+1$.

\begin{theorem}\label{dist-2-coloring}
Let $G$ be a graph of maximum degree $\Delta_1$ and let $H$ be a graph of order $n_2$. Then,
$$\Delta_1+n_2+1\le \chi_{\le 2}(G\odot H)\le \chi_{\le 2}(G)+n_2.$$
\end{theorem}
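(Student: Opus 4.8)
The plan is to prove the two inequalities separately: the lower bound by exhibiting a collection of pairwise ``close'' vertices that are forced to receive distinct colors, and the upper bound by directly building a distance-$2$ coloring of $G\odot H$ out of an optimal distance-$2$ coloring of $G$ plus $n_2$ extra colors.

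For the lower bound, I would let $v_i\in V$ be a vertex with $\delta(v_i)=\Delta_1$ and look at its closed neighborhood in $G\odot H$. In $G\odot H$ the vertex $v_i$ is adjacent to all $n_2$ vertices of the copy $H_i$ as well as to its $\Delta_1$ neighbors in $G$, so $N[v_i]$ (taken in $G\odot H$) has exactly $\Delta_1+n_2+1$ vertices, all distinct. Any two of these vertices are joined by a path of length at most $2$ through $v_i$, hence lie at distance at most $2$ in $G\odot H$; therefore in every distance-$2$ coloring they must all get different colors, which gives $\chi_{\le 2}(G\odot H)\ge \Delta_1+n_2+1$.

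For the upper bound, I would fix an optimal distance-$2$ coloring $c_0$ of $G$ with color set $\{1,\dots,\chi_{\le 2}(G)\}$ and add $n_2$ new colors $a_1,\dots,a_{n_2}$. Then colour each $v_i$ by $c_0(v_i)$, and in each copy $H_i$ colour its $n_2$ vertices bijectively with $a_1,\dots,a_{n_2}$ (the same fresh palette reused in every copy). To see this is a valid distance-$2$ coloring I would check the few relevant cases using the distance structure of the corona: two vertices $v_i,v_j$ at distance at most $2$ in $G\odot H$ are already at distance at most $2$ in $G$, so $c_0$ separates them; a vertex $v_i$ and a vertex $u\in V_j$ are at distance $d_G(v_i,v_j)+1$ (with $d_G=0$ meaning $j=i$), which is at most $2$ only when $j=i$ or $v_i\sim v_j$, and in either case an old color meets a new color; two vertices of the same copy $H_i$ are at distance at most $2$ and receive distinct fresh colors by construction; and two vertices lying in different copies $H_i,H_j$ are at distance $d_G(v_i,v_j)+2\ge 3$, so reusing the fresh palette does no harm. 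This coloring uses $\chi_{\le 2}(G)+n_2$ colors, establishing the upper bound.

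The proof is essentially bookkeeping, and the only point needing care is getting the corona distances right — in particular $d_{G\odot H}(u,w)=d_G(v_i,v_j)+2$ for $u\in V_i$, $w\in V_j$ with $i\ne j$, and $d_{G\odot H}(v_i,u)=d_G(v_i,v_j)+1$ for $u\in V_j$ — together with the observation that the whole of $N[v_i]$, and in particular all of $V_i$, is pairwise within distance $2$. Once these facts are in place, both inequalities follow immediately.
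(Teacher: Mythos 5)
Your proof is correct and follows essentially the same route as the paper: your lower bound is exactly the closed-neighborhood count at a maximum-degree vertex (the paper obtains it by applying its Appendix bound $\chi_{\le 2}(G)\ge \Delta+1$ to $G\odot H$, whose maximum degree is $\Delta_1+n_2$), and your upper bound extends an optimal distance-$2$ coloring of $G$ to the copies of $H$. The only cosmetic difference is that you reuse one fixed fresh palette of $n_2$ new colors in every copy, while the paper colors each copy $H_i$ with any $n_2$ colors avoiding those on $N_V[v_i]$; both are valid since vertices in distinct copies are at distance at least $3$.
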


\begin{proof}
The lower bound is a direct consequence of Theorem \ref{cota-k-par} (See Appendix)
by taking into account that the maximum degree of $G\odot H$ is
$n_2+\Delta_1$.

On the other hand, let $t=\chi_{\le 2}(G)+n_2$ and let $C=\{c_1,c_2,...,c_t\}$ be a set of pairwise distinct colors.  Let us color the vertices of $G$ by
using $\chi_{\le 2}(G)$ different colors and let us suppose that for
$v_i\in V$ we have $c(v_i)=c_i$. Hence, for every distinct vertices
$v_j,v_l\in N_V[v_i]$ we have $c(v_j)\ne c(v_l)$. Also, as
$\chi_{\le 2}(G)\ge\delta_{G}(v_i)+1$ for every $v_i\in V$ we obtain
that
$$n_2=t-\chi_{\le 2}(G)\le t-(\delta_{G}(v_i)+1).$$
Now, let $C_i=\{c_{i_1},c_{i_2},...,c_{i_r}\}$ be such that for every
$c_{i_j}\in C_i$ there exists  $v_j\in N_V[v_i]$ with
$c(v_i)=c_{i_j}$. Since $|C-C_i|=t-(\delta_{G}(v_i)+1)\ge n_2$ we
obtain that the vertices of the copy $H_i$ of $H$ can be colored
with the colors in $C-C_i$. Therefore, $\chi_{\le 2}(G\odot
H)\le t=\chi_{\le 2}(G)+n_2$.
\end{proof}

The following corollary shows that the above bounds are tight.

\begin{corollary} Let $H$ be any graph of order $n_2$. Then,
\begin{itemize}
\item[{\rm(i)}] If $n_1\ge 3$, then $\chi_{\le 2}(P_{n_1}\odot H)=n_2+3$.
\item[{\rm(ii)}] For any positive integer $t$, $\chi_{\le 2}(C_{3t}\odot H)=n_2+3$.
\item[{\rm(iii)}] For any tree $T$ of maximum degree $\Delta_1$, $\chi_{\le 2}(T\odot H)=n_2+\Delta_1+1$.
\end{itemize}
\end{corollary}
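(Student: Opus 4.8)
The plan is to read off all three equalities directly from Theorem~\ref{dist-2-coloring}. That result sandwiches $\chi_{\le 2}(G\odot H)$ between $\Delta_1+n_2+1$ and $\chi_{\le 2}(G)+n_2$, so it suffices to verify, for each graph $G$ occurring in the statement, that $\chi_{\le 2}(G)=\Delta(G)+1$; this collapses the two bounds and yields the claimed value at once.

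I would handle (iii) first, since it contains (i). Let $T$ be a tree with $\Delta(T)=\Delta_1$. The inequality $\chi_{\le 2}(T)\ge\Delta_1+1$ is immediate: a vertex $v$ of degree $\Delta_1$ together with its $\Delta_1$ neighbors is a set of $\Delta_1+1$ vertices that are pairwise at distance at most $2$, hence they need $\Delta_1+1$ distinct colors. For the matching upper bound I would root $T$ at an arbitrary vertex, color that root with color $1$, and then color the remaining vertices greedily in BFS order using the palette $\{1,\dots,\Delta_1+1\}$. When a non-root vertex $w$ with parent $p$ is processed, the only previously colored vertices within distance $2$ of $w$ are $p$, the parent of $p$ (when $p$ is not the root), and the other children of $p$ (the siblings of $w$) --- this uses that in a tree no further vertex can lie within distance $2$. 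Since $p$ has at most $\Delta_1-2$ children besides $w$ when $p$ is a non-root vertex, and at most $\Delta_1-1$ such children when $p$ is the root, in both cases the set of forbidden colors has size at most $\Delta_1$, so a free color among the $\Delta_1+1$ available always exists. Hence $\chi_{\le 2}(T)=\Delta_1+1$, and Theorem~\ref{dist-2-coloring} forces $\chi_{\le 2}(T\odot H)=\Delta_1+n_2+1$. Part (i) is then the special case $T=P_{n_1}$ with $n_1\ge 3$, for which $\Delta_1=2$.

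For (ii), the cycle $C_{3t}$ has maximum degree $2$, so Theorem~\ref{dist-2-coloring} already gives $\chi_{\le 2}(C_{3t}\odot H)\ge n_2+3$. For the reverse inequality I would only need $\chi_{\le 2}(C_{3t})=3$: assigning to the $j$-th vertex around the cycle the color $(j\bmod 3)+1$ is a proper distance-$2$ coloring precisely because $3\mid 3t$, so the repeating block of three distinct colors closes up consistently, while any three consecutive vertices are pairwise within distance $2$ and thus three colors are necessary. This gives $\chi_{\le 2}(C_{3t}\odot H)\le\chi_{\le 2}(C_{3t})+n_2=n_2+3$, and equality follows.

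The only step requiring genuine care is the upper bound $\chi_{\le 2}(T)\le\Delta_1+1$: although the coloring is a routine greedy argument, one must be precise about exactly which already-colored vertices can lie within distance $2$ of the current vertex in the BFS order (in a tree these are only the parent, the grandparent, and the siblings), and one must separate the subcase in which the parent is the root from the subcase in which it is not, so that the count of forbidden colors never exceeds $\Delta_1$. Everything else is a one-line substitution into Theorem~\ref{dist-2-coloring}.
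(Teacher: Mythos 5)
Your proposal is correct and follows exactly the route the paper intends: the corollary is presented as an immediate consequence of Theorem~\ref{dist-2-coloring}, the two bounds collapsing because $\chi_{\le 2}(G)=\Delta(G)+1$ for $G=P_{n_1}$ ($n_1\ge 3$), $G=C_{3t}$ and $G$ a tree. Your BFS-greedy argument for $\chi_{\le 2}(T)=\Delta_1+1$ and the $3$-periodic coloring of $C_{3t}$ correctly supply the facts the paper leaves implicit.
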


Notice that, for instance, if $G$ is a cycle of order $3t+1$ or
$3t+2$, with $t\ge 1$ an integer,  then $\chi_{\le
2}(C_{3t+1})=4$ and $\chi_{\le 2}(C_{3t+2})=5$. Thus, we have that
$5=\chi_{\le 2}(C_{3t+1}\odot N_2)<6$ and $5=\chi_{\le
2}(C_{3t+2}\odot N_2)<6$. Also, for the complete bipartite graph
$K_{s,t}$, with $2<s\le t$, we have that $\chi_{\le 2}(K_{s,t})=s+t$
and $t+2<\chi_{\le 2}(K_{s,t}\odot K_1)=s+t<s+t+1$.

\begin{theorem}
Let $G$ be a graph of minimum and maximum degree $\delta_1$ and $\Delta_1$, respectively and let $H$ be a graph of order $n_2$. Then
$$\chi_{\le 3}(G\odot H)\le \chi_{\le 3}(G)+n_2(\Delta_1+1).$$
Moreover, if $G$ is triangle free, then
$$\chi_{\le 3}(G\odot H)\ge 2n_2+\Delta_1+\delta_1.$$
\end{theorem}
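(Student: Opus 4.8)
The plan is to handle the upper and lower bounds separately, following the template already used for the distance-$2$ case.

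For the upper bound, first I would fix an optimal distance-$3$ coloring of $G$ using $\chi_{\le 3}(G)$ colors, and enlarge the palette by a set $R$ of $n_2(\Delta_1+1)$ fresh colors, one block $R_i$ of $n_2$ colors for every color class index. The point is that in $G\odot H$ two copies $H_i$ and $H_j$ interact (i.e.\ contain vertices at distance $\le 3$) precisely when $d_G(v_i,v_j)\le 1$; and a vertex of $H_i$ lies within distance $3$ of $v_j$ exactly when $d_G(v_i,v_j)\le 2$. So if I give the copy $H_i$ colors from the block $R_{c(v_i)}$, I must make sure that $v_i,v_j$ receive distinct $G$-colors whenever $d_G(v_i,v_j)\le 2$ — which holds since $c$ is a distance-$3$ coloring of $G$ — and that the colored vertices of any two copies $H_i,H_j$ with $d_G(v_i,v_j)=1$ get disjoint color blocks, which again holds because adjacent vertices of $G$ get distinct $G$-colors. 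The only remaining collision to rule out is between a vertex of $H_i$ and the vertex $v_j$ of $G$ with $d_G(v_i,v_j)\le 2$; this is avoided automatically because the $H$-copies draw from the fresh palette $R$, disjoint from the $G$-colors. Within a single copy $H_i$ all $n_2$ vertices are pairwise within distance $3$, so $n_2$ colors from $R_{c(v_i)}$ suffice. Counting, $\chi_{\le 3}(G)+|R|=\chi_{\le 3}(G)+n_2(\Delta_1+1)$ colors are used, giving the bound. I would make the bookkeeping precise by writing out, for a vertex $u\in V_i$, the set $M_3[u]$ explicitly in terms of $M_2[v_i]$ and the neighboring copies, and checking no two same-colored vertices fall inside it.

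For the lower bound, assume $G$ is triangle-free. Pick a vertex $v_i$ of maximum degree $\Delta_1$ and a neighbor $v_j$ of $v_i$; since $G$ is triangle-free, $N_G(v_i)$ and $N_G(v_j)$ are disjoint except that each contains the other's center, so $|N_G[v_i]\cup N_G[v_j]|\ge \Delta_1+\delta_1$ (more carefully, $\ge \delta_G(v_i)+\delta_G(v_j)=\Delta_1+\delta_1$ since triangle-freeness forbids common neighbors). Now consider the vertex set $V_i\cup V_j\cup\{v_\ell : v_\ell\in N_G[v_i]\cup N_G[v_j]\}$ inside $G\odot H$ and argue it is a clique in the distance-$3$ sense: any two vertices of $V_i\cup V_j$ are within distance $3$ (at most $v\text{--}v_i\text{--}v_j\text{--}v$), each vertex of $V_i$ is within distance $3$ of every $v_\ell\in N_G[v_i]\cup N_G[v_j]$ (at most $V_i\text{--}v_i\text{--}v_j\text{--}v_\ell$), and any two of the $v_\ell$'s are within distance $3$ in $G$ hence in $G\odot H$. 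Therefore a distance-$3$ coloring must use at least $|V_i|+|V_j|+|N_G[v_i]\cup N_G[v_j]|=2n_2+(\Delta_1+\delta_1)$ distinct colors on this set, yielding $\chi_{\le 3}(G\odot H)\ge 2n_2+\Delta_1+\delta_1$.

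The main obstacle is the lower-bound clique verification: one must be careful that every pair in the chosen set really is within graph-distance $3$ in $G\odot H$ — in particular the pairs of "outer" $G$-vertices $v_\ell,v_{\ell'}$ lying in $N_G[v_i]\cup N_G[v_j]$, which can be at $G$-distance up to $3$ (one in $N_G(v_i)$, one in $N_G(v_j)$, routed through $v_i v_j$), and the pairs consisting of an $H$-vertex in $V_i$ and a far $v_\ell\in N_G(v_j)$, routed as $u\text{--}v_i\text{--}v_j\text{--}v_\ell$ of length exactly $3$. Triangle-freeness is used only to guarantee $N_G(v_i)\cap N_G(v_j)=\varnothing$, so that the count is $\Delta_1+\delta_1$ rather than something smaller; I would state this as a short separate observation before assembling the clique.
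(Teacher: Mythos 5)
Your lower bound is exactly the paper's argument and is correct: you take an edge $v_iv_j$ with $\delta(v_i)=\Delta_1$, observe that all vertices of $V_i\cup V_j\cup N_V[v_i]\cup N_V[v_j]$ are pairwise within distance $3$ in $G\odot H$, and use triangle-freeness to get $|N_V[v_i]\cup N_V[v_j]|\ge \Delta_1+\delta_1$; nothing to add there.

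The upper bound, however, has a genuine counting gap. You index the fresh blocks by the color classes of the distance-$3$ coloring $c$ of $G$ (``the copy $H_i$ gets colors from $R_{c(v_i)}$'', one block per color class), yet you declare $|R|=n_2(\Delta_1+1)$. These two choices are incompatible: with block index $c(v_i)$ you need one block of size $n_2$ for each of the $\chi_{\le 3}(G)$ classes, i.e.\ $n_2\chi_{\le 3}(G)$ fresh colors, and since the closed neighborhood of a maximum-degree vertex is pairwise within distance $2$ one always has $\chi_{\le 3}(G)\ge \Delta_1+1$. So the scheme as written only yields $\chi_{\le 3}(G\odot H)\le (n_2+1)\chi_{\le 3}(G)$, which is weaker than the claimed bound whenever $\chi_{\le 3}(G)>\Delta_1+1$. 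The repair uses precisely the observation you record but do not exploit: copies $H_i,H_j$ interact only when $v_i\sim v_j$, so the block assignment need only be a proper coloring of $G$ in the ordinary sense, not in the distance-$3$ sense. Take a greedy proper coloring $c'$ of $G$ with at most $\Delta_1+1$ colors (possible because the maximum degree of $G$ is $\Delta_1$), keep the distance-$3$ coloring $c$ only for the vertices of $G$, and color each $H_i$ injectively from the block $R_{c'(v_i)}$; then $\chi_{\le 3}(G)+n_2(\Delta_1+1)$ colors suffice and your verification of the remaining constraint types goes through verbatim. For comparison, the paper proceeds locally: it fixes the distance-$3$ coloring of $G$ and argues that around each $v_i$ the $\Delta_1+1$ copies attached to $N_V[v_i]$ can be colored from the $n_2(\Delta_1+1)$ leftover colors; in either route the factor $\Delta_1+1$ comes from decoupling the coloring of $G$ from the assignment of color blocks to the copies, which is the step your draft conflates.
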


\begin{proof}
Let $t=\chi_{\le 3}(G)+n_2(\Delta_1+1)$ and let $C=\{c_1,c_2,...,c_t\}$ be a set of pairwise distinct colors.  Let us color the vertices of $G$ by using
$\chi_{\le 3}(G)$ different colors and let us suppose that for
$v_i\in V$ we have that $c(v_i)=c_i$.

Now, let $C_i=\{c_{i_1},c_{i_2},...,c_{i_r}\}$ be such that for every
$c_{i_j}\in C_i$ there exists  $v_j\in \displaystyle\bigcup_{v_l\in
N_V[v_i]}N_V[v_l]$ with $c(v_j)=c_{i_j}$. Thus, $|C_i|\le \chi_{\le
3}(G)$ and we have that

$$|C-C_i|= t-|C_i|\ge t-\chi_{\le 3}(G)=n_2(\Delta_1+1).$$
So, we obtain that the vertices of the $\Delta+1$ copies of $H$
corresponding to the vertices  of $G$ in $N_V[v_i]$ can be colored
with the colors in $C-C_i$. Also, if $v\in V_l$ such that
$v_l\notin N_V[v_i]$, then there exists a vertex $u\in V_r$, with
$v_r\in N_V[v_i]$, such that $d_{G\odot H}(u,v)>3$. Thus, $v$ can be
colored by using one color from the set of colors in $C-C_i$.
Therefore, $\chi_{\le 3}(G\odot H)\le t=\chi_{\le
3}(G)+n_2(\Delta_1+1)$.

On the other hand, let us suppose $G$ is triangle free. Let $v_i\in
V$ be a vertex of maximum degree in $G$ and let $v_j\in N_V[v_i]$,
$j\ne i$. Hence, for every two different vertices $u,v\in (V_i\cup
V_j)\cup(N_V[v_i]\cup N_V[v_j])$ we have $d_{G\odot H}(u,v)\le 3$.
So, we have that $c(u)\ne c(v)$. Thus, we obtain that
$$\chi_{\le 3}(G\odot H)\ge |(V_i\cup V_j)\cup(N_V[v_i]\cup N_V[v_j])|\ge 2n_2+\Delta_1+\delta_1.$$

\end{proof}

Notice that the above bounds are tight. For instance, if $H$ is a
graph of order $n_2$, then  the lower bound is achieved for the case
of $C_4\odot H$, where we have $\chi_{\le 3}(C_4\odot H)=2n_2+4$.
Moreover, the upper bound is tight for the corona graph
$K_{n_1}\odot H$, in which case it is satisfied that $\chi_{\le
3}(K_{n_1}\odot H)=n_1n_2+n_1$. Next we study the distance-$k$
chromatic number of some particular cases of corona graphs.

\begin{proposition}
Let $H$ be a graph of order $n_2$ and let $T=(V,E)$ be a tree. Let
$v_i,v_j\in V$ such that
$\Delta_{ij}(T)=\delta(v_i)+\delta(v_j)=\max\{\delta(v_l)+\delta(v_r)\;:\;v_l,v_r\in
V,\;v_l\sim v_r\}$. Then
$$\chi_{\le 3}(T\odot H)=2n_2+\Delta_{ij}(T).$$
\end{proposition}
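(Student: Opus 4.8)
The plan is to combine the general upper bound from the previous theorem with a matching lower bound tailored to the tree structure. For the upper bound, observe that the theorem just proved gives $\chi_{\le 3}(T\odot H)\le \chi_{\le 3}(T)+n_2(\Delta_1+1)$, but this is too weak in general; instead I would argue directly. Since $T$ is a tree, the distance-$3$ neighborhood of a vertex in $T\odot H$ decomposes nicely, so the idea is to produce an explicit distance-$3$ coloring of $T\odot H$ using exactly $2n_2+\Delta_{ij}(T)$ colors. First I would color the vertices of $T$ itself: because $T$ is a tree (hence bipartite, with small distance-$3$ chromatic number controlled by local degrees), one can color $V(T)$ using few colors so that any two vertices at distance $\le 3$ in $T$ receive distinct colors; crucially, around any edge $v_lv_r$ the number of colors appearing within distance $1$ of $\{v_l,v_r\}$ is at most $\delta(v_l)+\delta(v_r)\le \Delta_{ij}(T)$. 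Then, for each copy $H_l$, I would assign colors avoiding those already used on $M_1[v_l]\cup M_1[v_{l'}]$ for neighbors $v_{l'}$ of $v_l$, using a fresh block of $n_2$ colors for $H_l$ and reusing colors across copies that are far apart in $T$; the total is the $\Delta_{ij}(T)$ "backbone" colors plus $2n_2$ extra colors, matching the target.

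For the lower bound, I would exhibit a set of vertices in $T\odot H$ that is pairwise within distance $3$ and has cardinality $2n_2+\Delta_{ij}(T)$, forcing that many colors. Take the edge $v_iv_j$ of $T$ realizing $\Delta_{ij}(T)$. Consider the vertex set $(V_i\cup V_j)\cup (N_T[v_i]\cup N_T[v_j])$. Since $T$ is triangle-free (indeed a tree), $N_T[v_i]\cap N_T[v_j]=\{v_i,v_j\}$, so $|N_T[v_i]\cup N_T[v_j]|=\delta(v_i)+1+\delta(v_j)+1-2=\delta(v_i)+\delta(v_j)=\Delta_{ij}(T)$; adding the two full copies $V_i,V_j$ of $H$, disjoint from these and from each other, gives $2n_2+\Delta_{ij}(T)$ vertices. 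One then checks that any two of these vertices are at distance at most $3$ in $T\odot H$: a vertex of $V_i$ is at distance $1$ from $v_i$, at distance $2$ from any neighbor of $v_i$ (including $v_j$) and from any other vertex of $V_i$, at distance $3$ from any neighbor of $v_j$ and from any vertex of $V_j$ (via $v_i$–$v_j$); the verification for pairs inside $N_T[v_i]\cup N_T[v_j]$ uses $d_T(x,y)\le 2$ for $x,y\in N_T[v_i]$ and $d_T(x,y)\le 3$ for $x\in N_T[v_i], y\in N_T[v_j]$. Hence $\chi_{\le 3}(T\odot H)\ge 2n_2+\Delta_{ij}(T)$.

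The main obstacle will be making the upper-bound coloring precise, in particular verifying that after committing the backbone coloring of $T$ with at most $\Delta_{ij}(T)$ colors appearing locally around every edge, the remaining $2n_2$ colors genuinely suffice for all copies of $H$ simultaneously. The delicate point is that a copy $H_l$ conflicts (distance $\le 3$) with $H_l$ itself, with the vertices of $M_1[v_l]$, and with copies $H_{l'}$ for $v_{l'}\sim v_l$, but with nothing else; so one needs a proper coloring of an auxiliary "conflict graph" on the copies, and the bound $\delta(v_i)+\delta(v_j)\le\Delta_{ij}(T)$ on any edge is exactly what allows two adjacent copies plus the backbone to fit in $2n_2+\Delta_{ij}(T)$ colors. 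I would handle this by rooting the tree and coloring copies greedily from the root, at each step choosing the $n_2$ colors for $H_l$ from the complement of the colors used on $M_1[v_l]$ and on the already-colored parent copy, which leaves at least $2n_2+\Delta_{ij}(T)-(\delta(v_l)+1)-(n_2) \ge n_2$ colors available; the triangle-free (tree) hypothesis guarantees the relevant neighborhoods overlap only in the predictable way, so no further conflicts arise.
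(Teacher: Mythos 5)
Your lower bound is exactly the paper's: the set $(V_i\cup V_j)\cup N_V[v_i]\cup N_V[v_j]$ has $2n_2+\Delta_{ij}(T)$ vertices (since $T$ has no triangles, the two closed neighborhoods meet only in $\{v_i,v_j\}$) and is pairwise at distance at most $3$ in $T\odot H$; that part is correct.

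The gap is in the upper bound, concretely in your description of the conflict structure. You claim that a copy $H_l$ conflicts with itself, with the backbone vertices in $M_1[v_l]$, and with the copies $H_{l'}$ for $v_{l'}\sim v_l$, ``but with nothing else.'' That is false: if $v_m$ is a backbone vertex with $d_T(v_l,v_m)=2$ and $u\in V_l$, then $d_{T\odot H}(u,v_m)=1+d_T(v_l,v_m)=3$, so $H_l$ also conflicts with every backbone vertex at distance exactly $2$ from $v_l$, i.e.\ with all of $M_2[v_l]$ in $T$. Hence the greedy count $2n_2+\Delta_{ij}(T)-(\delta(v_l)+1)-n_2\ge n_2$ is the wrong bookkeeping: the backbone colors to be avoided are those appearing on $M_2[v_l]$, which can be as many as $\Delta_{ij}(T)$, not $\delta(v_l)+1$. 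The construction can be repaired, but only by supplying the ingredient you merely assert, namely that the backbone itself admits a distance-$3$ coloring with at most $\Delta_{ij}(T)$ colors ($\chi_{\le 3}(T)\le\Delta_{ij}(T)$ for trees; e.g.\ color $T$ greedily in BFS order from a root, where a vertex with parent $p$ and grandparent $g$ sees at most $\delta(p)+\delta(g)-1\le\Delta_{ij}(T)-1$ previously colored vertices within distance $3$). Once the backbone is confined to such a palette of $\Delta_{ij}(T)$ colors, disjoint from a separate palette of $2n_2$ colors for the copies split into two blocks along the bipartition of $T$ (adjacent copies are the only conflicting pairs of copies, since copies at tree-distance $2$ are at corona-distance $4$), the bound $\chi_{\le 3}(T\odot H)\le 2n_2+\Delta_{ij}(T)$ does follow; in that corrected form your argument is in fact more explicit than the paper's, which starts from an optimal distance-$3$ coloring of $T$ and argues by reusing colors on vertices far from the edge $v_iv_j$.
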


\begin{proof}
Let $B=V_i\cup V_j\cup N_V[v_i]\cup N_V[v_j]$ be the set of vertices of
$T\odot H$. Since $T$ is a tree we  have that
$(N_V[v_i]-\{v_j\})\cap (N_V[v_j]-\{v_i\})=\emptyset$. Thus,
$|B|=|V_i|+|V_j|+|N_V[v_i]|+|N_V[v_j]|-2$. Also, for every two
different vertices $a,b\in B$ we have that $d_{T\odot H}(a,b)\le 3$
and, as a consequence, we obtain that $c(a)\ne c(b)$. Therefore,
\begin{align*}
\chi_{\le 3}(T\odot H)&\ge |B|\\
&=|V_i|+|V_j|+|N_V[v_i]|+|N_V[v_j]|-2\\
&=2n_2+\delta(v_i)+\delta(v_j)\\
&=2n_2+\Delta_{ij}(T).
\end{align*}
On the other hand, let $t=2n_2+\Delta_{ij}(T)$ and let $C=\{c_1,c_2,...,c_t\}$ be a set of pairwise distinct colors. Let us color the set of vertices of
$T$ by using the minimum number of colors from the set $C$ and let
us suppose that $c(v_i)=c_i$ and $c(v_j)=c_j$. Now, let
$C_{ij}=\{c_{ij_1},c_{ij_2},...c_{ij_{r}}\}\subset C$ be such that for
every $c_{ij_{l}}\in C_{ij}$ there exists $a\in N_V[v_i]\cup
N_V[v_j]$ with $c(a)=c_{ij_{l}}$. Notice that
$|C_{ij}|=\Delta_{ij}(T)$ and also, any vertex belonging to $V_i\cup
V_j$ can be colored by using the colors in $C-C_{ij}$.

Now, if $v_q\notin N_V[v_i]\cup N_V[v_j]$ there exists a vertex $a\in
N_V[v_i]\cup N_V[v_j]$  such that $d_{T\odot H}(a,v_q)>3$. Thus,
$v_q$ can be colored by using one of the colors in $C_{ij}$. Also,
if $b\in V_f$, with $f\ne i,j$, then there exist a vertex $b'\in
V_i\cup V_j$ such that $d_{T\odot H}(b',b)>3$. Thus, $b'$ can be
colored by using one color from the set of colors in $C-C'$.
Therefore, we have that $T\odot H$ can be colored with $t$ colors.
As a consequence, $\chi_{\le 3}(T\odot H)\le t=2n_2+\Delta_{ij}(T)$
and the result follows.
\end{proof}

\begin{proposition}
Let $H$ be a graph of order $n_2$ and let $n_1\ge 2$. Then for every $2\le k\le n_1$,
$$\chi_{\le k}(P_{n_1}\odot H)=\left\{\begin{array}{ll}
                                        n_2(k-1)+k+1, & {\rm if}\;\; k\le n_1-1,\\
                                        n_2(k-1)+k, & {\rm if}\;\; k= n_1.
                                      \end{array}
\right.$$
\end{proposition}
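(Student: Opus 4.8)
The plan is to establish two matching bounds. For the lower bound I would produce, in each range of $k$, a set of vertices of $P_{n_1}\odot H$ that are pairwise at distance at most $k$; since any distance-$k$ coloring must give such vertices pairwise distinct colors, the cardinality of any such set is a lower bound for $\chi_{\le k}(P_{n_1}\odot H)$. Writing $v_1,\dots,v_{n_1}$ for the vertices of the path and recording the three distance formulas in $P_{n_1}\odot H$ --- namely $d(v_i,v_j)=|i-j|$, $d(v_i,u)=|i-j|+1$ for $u\in V_j$, and $d(u,w)=|i-j|+2$ for $u\in V_i$, $w\in V_j$ with $i\ne j$ (while $d(u,w)\le 2$ when $i=j$) --- one checks that for $k\le n_1-1$ the set $\{v_{a-1},v_a,\dots,v_{a+k-1}\}\cup\bigcup_{i=a}^{a+k-2}V_i$ is pairwise at distance at most $k$ for any $a$ with $2\le a$ and $a+k-1\le n_1$ (such $a$ exists exactly because $k\le n_1-1$), giving $\chi_{\le k}(P_{n_1}\odot H)\ge n_2(k-1)+k+1$; and for $k=n_1$ the set $\{v_1,\dots,v_{n_1}\}\cup\bigcup_{i=1}^{n_1-1}V_i$ has this property, giving $\chi_{\le k}(P_{n_1}\odot H)\ge n_2(n_1-1)+n_1$.

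For the upper bound I would exhibit an explicit distance-$k$ coloring using exactly the claimed number of colors. Split the palette into pairwise disjoint parts: a path-palette $P$ of size $k+1$ when $k\le n_1-1$ (resp.\ of size $n_1$ when $k=n_1$), together with copy-palettes $Q_0,Q_1,\dots,Q_{k-2}$, each of size $n_2$; the total is then precisely $n_2(k-1)+k+1$ (resp.\ $n_2(k-1)+k$). Color the path periodically, taking $c(v_i)$ to be the $((i-1)\bmod(k+1))$-th color of $P$ when $k\le n_1-1$; this is a proper distance-$k$ coloring of $P_{n_1}$ because two path vertices share a color only if their indices differ by a multiple of $k+1$, hence by more than $k$. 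When $k=n_1$ instead use a bijection of the path onto $P$, which is proper since $D(P_{n_1})=n_1-1\le k$ (the remark characterizing when $\chi_{\le k}(G)=n$). Finally color each copy $H_i$ by mapping $V_i$ bijectively onto $Q_r$, where $r$ is the residue of $i$ modulo $k-1$.

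It then remains to verify that this is a valid distance-$k$ coloring, by running through the potentially conflicting pairs: path--path conflicts are handled by the chosen path coloring; a path vertex and a copy vertex always receive colors from the disjoint palettes $P$ and $Q_r$; two vertices of a single copy get distinct colors by construction; and if $u\in V_i$, $w\in V_j$ with $i\ne j$ conflict, then $|i-j|\le k-2$, so $i$ and $j$ are not congruent modulo $k-1$ (two distinct indices congruent mod $k-1$ differ by at least $k-1>k-2$), whence $V_i$ and $V_j$ use different palettes. Combining with the lower bound gives equality. I do not expect a genuine obstacle here: the only points requiring care are the distance bookkeeping and checking that the grouping of copies by residue modulo $k-1$ is simultaneously compatible with every conflict constraint, together with the boundary cases $k=2$ (where $k-1=1$ and there is a single copy-palette) and $k=n_1$ (where the path is colored rainbow and there are $n_1-1$ copy-palettes).
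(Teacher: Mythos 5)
Your proposal is correct and follows essentially the same route as the paper: the lower bound uses the very same witness set of $k+1$ consecutive path vertices together with the $k-1$ intermediate copies (resp.\ all path vertices and $n_1-1$ copies when $k=n_1$), and the upper bound rests on the same palette split into $k+1$ (resp.\ $n_1$) path colors plus $k-1$ blocks of $n_2$ copy colors. Your explicit periodic assignment (path colors taken modulo $k+1$, copy palettes indexed by $i \bmod (k-1)$) is in fact a cleaner, fully verified version of the extension step that the paper only sketches when it reuses colors outside the window $Q$.
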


\begin{proof}
Let us suppose $k\le n_1-1$ and let $P_{n_1}=v_1v_2...v_{n_1}$ in
$P_{n_1}\odot H$. Hence,  there exists a vertex $v_i$ of degree two
in $P_{n_1}$ such that $v_{i+k-2}$ has degree two and for every two
different vertices $a,b\in
A=\{v_{i-1},v_i,v_{i+1},....,v_{i+k-2},v_{i+k-1}\}$ we have that
$d_{P_{n_1}\odot H}(a,b)\le k$. Thus, $c(a)\ne c(b)$. Now, let
$B=(\bigcup_{j=i}^{i+k-2}V_j)\cup A$. Hence, for every $a,b\in B$ we
have that $d_{P_{n_1}\odot H}(a,b)\le k$. Thus, $c(a)\ne c(b)$.
Therefore,
$$\chi_{\le k}(P_{n_1}\odot H)\ge |B|=|A|+\left|\bigcup_{j=i}^{i+k-2}V_j\right|=k+1+n_2(k-1).$$
On the other hand, let $t=k+1+n_2(k-1)$ and let
$C=\{c_1,c_2,...,c_t\}$ be a set of pairwise distinct colors. Now,  let
$Q=\{v_i,v_{i+1},...,v_{i+k-1},v_{i+k}\}$ be any $k+1$ consecutive
vertices in $P_{n_1}$. Hence, the vertices in $Q$ can be colored by
using $k+1$ different colors of $C$. Now, let $C'\subset C$ be such
that for every $c_l\in C'$ there exists $v\in Q$ with $c(v)=c_l$.
Hence, since $|C-C'|=n_1(k-1)$ we obtain that the vertices in
$\bigcup_{j=1}^{k-1}V_{i+j}$ can be colored by using the
colors in $C-C'$. Now, if $v_r\notin Q$, then there exists a vertex
$v_{q}\in Q$ such that $d_{P_{n_1}\odot H}(v_r,v_q)>k$. So, $v_r$
can be colored by using a color from the set $C'$. Also, if $u\in
V_f$ such that $v_f\notin Q-\{v_i,v_{i+k}\}$, then there exists a
vertex $v\in V_y$ such that $v_y\in Q-\{v_i,v_{i+k}\}$ for which
$d_{P_{n_1}\odot H}(u,v)>k$. So, $u$ can be colored by using a color
from the set $C-C'$. Therefore, $\chi_{\le t}(P_{n_1}\odot H)\le
t=k+1+n_2(k-1)$ and the result follows.

Now, let us suppose that $k=n_1$. So, for every different vertices
$v_i,v_j$ of $P_{n_1}$ in $P_{n_1}\odot H$  we have that $c(v_i)\ne
c(v_j)$. Now, let $B=(\bigcup_{i=1}^{n_1-1}V_i)\cup V$. Hence, we
have that, for every two different vertices $a,b\in B$,
$d_{P_{n_1}\odot H}(a,b)\le k$. Thus, $c(a)\ne c(b)$ and, as a
consequence,
$$\chi_{\le k}(P_{n_1}\odot H)\ge |B|=n_1+\left|\bigcup_{i=1}^{n_1-1}V_i\right|=n_1+n_2(n_1-1)=n_2(k-1)+k.$$
On the other hand, let $t=n_2(k-1)+k$ and let
$C=\{c_1,c_2,...,c_t\}$. Since $k=n_1$ we can color the set of
vertices of $P_{n_1}$ by using $k$ colors. Let $C'$ be the set of
colors used to color the set $V$. Since the distance between
the vertices in $V_1$ and $V_{n_1}$ is greater than $k$, these sets
can be assigned the same colors and the rest of the vertices in
$\bigcup_{i=1}^{n_1-1}V_i$ can be colored by using the colors in
$C-C'$. Therefore, we obtain that $\chi_{\le k}(P_{n_1}\odot H)\le
t=n_2(k-1)+k$ and the result follows.
\end{proof}

\section{Roman domination}

The concepts about Roman domination were introduced first by Steward
in \cite{roman-1} and studied further by some authors, for instance we mention \cite{roman}. A map $f : V
\rightarrow \{0, 1, 2\}$ is a {\em Roman dominating function} for a
graph $G$ if for every vertex $v$ with $f(v) = 0$, there exists a
vertex $u\in N(v)$ such that $f(u) = 2$. The {\em weight} of a Roman
dominating function is given by $f(V) =\sum_{u\in V}f(u)$. The
minimum weight of a Roman dominating function on $G$ is called the
{\em Roman domination number} of $G$ and it is denoted by
$\gamma_R(G)$. In this section we study the Roman domination number
of corona graphs.

Let $f$ be a Roman dominating function on $G$ and let $\Pi(G)=\{B_0,
B_1, B_2\}$ be the ordered partition of the vertices of $G$ induced
by $f$, where $B_i = \{v\in V\;:\; f(v) = i\}$ and let
$b_i(G)=|B_i|$, with $i\in \{0, 1, 2\}$. Frequently, a Roman dominating function $f$ is represented by its induced partition $\{B_0,
B_1, B_2\}$. It is clear that for any
Roman dominating function $f$ on the graph $G=(V,E)$ of order $n$ we
have that $f(V)=\sum_{u\in V}f(u)=2b_2(G)+b_1(G)$ and
$b_0(G)+b_1(G)+b_2(G)=n$. The following lemma obtained in \cite{roman} will be useful into proving some of the results in this section.

\begin{lemma}{\em \cite{roman}}\label{lema-roman}
For any graph $G$, $\gamma(G)\le \gamma_R(G)\le 2\gamma(G)$.
\end{lemma}

\begin{theorem}
Let $G$ and $H$ be two graphs of order $n_1$ and $n_2\ge 2$,
respectively. Then,
$$\gamma_R(G\odot H)=2n_1.$$
\end{theorem}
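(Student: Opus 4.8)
The plan is to establish the two inequalities $\gamma_R(G\odot H)\le 2n_1$ and $\gamma_R(G\odot H)\ge 2n_1$ separately. For the upper bound I would exhibit an explicit Roman dominating function $f$: set $f(v_i)=2$ for every vertex $v_i$ of $G$ and $f(u)=0$ for every vertex $u$ lying in some copy $H_i$. Since each such $u$ is adjacent to $v_i$ with $f(v_i)=2$, and no vertex of $V$ receives label $0$, the map $f$ is indeed a Roman dominating function, of weight $\sum_{i=1}^{n_1}f(v_i)=2n_1$. Hence $\gamma_R(G\odot H)\le 2n_1$.

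For the lower bound, let $f$ be a Roman dominating function on $G\odot H$ of minimum weight, so that $\gamma_R(G\odot H)=\sum_{u}f(u)$, and observe that the sets $S_i=V_i\cup\{v_i\}$, $i\in\{1,\dots,n_1\}$, partition $V(G\odot H)$. It therefore suffices to prove that $f(S_i)=\sum_{u\in S_i}f(u)\ge 2$ for each $i$, since summing over $i$ then yields $\gamma_R(G\odot H)\ge 2n_1$. I would argue this claim by a short case analysis on the value $f(v_i)$. If $f(v_i)=2$ the claim is immediate. If $f(v_i)=1$, then either some vertex of $V_i$ carries a positive label, and $f(S_i)\ge 2$, or else every vertex of $V_i$ has label $0$; in that latter case pick any $u\in V_i$, note that all neighbours of $u$ in $G\odot H$ lie in $S_i$, and none of them has label $2$ (as $f(v_i)=1$), contradicting that $f$ is a Roman dominating function.

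The main point requiring care is the case $f(v_i)=0$, because then $v_i$ may be Roman-dominated by a neighbour of $v_i$ inside $G$, so one cannot directly force a label-$2$ vertex into the copy $H_i$. Here I would instead count: if some vertex of $V_i$ has label $2$ then $f(S_i)\ge 2$ already; otherwise no vertex of $V_i$ has label $2$, and since $f(v_i)=0$, every label-$0$ vertex of $V_i$ would be undominated (all its neighbours lie in $V_i\cup\{v_i\}$), so in fact \emph{every} vertex of $V_i$ must carry a label of at least $1$; as $|V_i|=n_2\ge 2$ this gives $f(S_i)\ge n_2\ge 2$. This is exactly where the hypothesis $n_2\ge 2$ enters — for $n_2=1$ the statement fails (e.g. $K_2\odot K_1=P_4$ has $\gamma_R=3<4$), which serves as a sanity check. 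Combining the two bounds gives $\gamma_R(G\odot H)=2n_1$.
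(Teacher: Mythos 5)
Your proof is correct and follows essentially the same strategy as the paper: the lower bound is obtained by the same case analysis on $f(v_i)$ showing $f(V_i\cup\{v_i\})\ge 2$ for each $i$ (using $n_2\ge 2$ exactly where the paper does), and your explicit function assigning $2$ to every vertex of $G$ is just an unpacked version of the paper's appeal to $\gamma_R\le 2\gamma$ together with $\gamma(G\odot H)=n_1$.
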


\begin{proof}
Let $f$ be a Roman dominating function on $G\odot H$ and let $v_i$
be a vertex of $G$. We have the following cases.

Case 1: $f(v_i)=0$ or $f(v_i)=1$. If there is a vertex $v\in V_i$,
such that $f(v)=0$, then there exists other vertex $x\in V_i$ with
$x\sim v$ and $f(x)=2$. On the contrary, if for every $u\in V_i$ we
have that $f(u)\ne 0$, then $f(u)=1$ or $f(u)=2$ for any vertex
$u\in V_i$. As a consequence, since $n_2\ge 2$, in both cases we
have that $f(V_i\cup \{v_i\})=\sum_{u\in V_i\cup \{v_i\}}f(u)\ge 2$.

Case 2: $f(v_i)=2$. It is clear that $f(V_i\cup \{v_i\})=\sum_{u\in
V_i\cup \{v_i\}}f(u)\ge 2$.

Thus, we obtain that $$\gamma_R(G\odot H)=\sum_{v\in
{V\cup\left(\cup_{i=1}^{n_1}V_i\right)}}f(v)=\sum_{i=1}^{n_1}\sum_{v_\in
V_i\cup\{v_i\}}f(v)\ge 2n_1.$$

On the other side, since $\gamma(G\odot H)=n_1$, by Lemma
\ref{lema-roman} we have that $$\gamma_R(G\odot H)\le 2\gamma(G\odot
H)=2n_1.$$ Therefore, the result follows.
\end{proof}

Next we analyze the corona graph $G\odot K_1$.

\begin{theorem}
Let $G$ be a graph of order $n$. Then there exists a Roman dominating function $\{B_0, B_1, B_2\}$ of minimum weight such that
$$\gamma_R(G\odot K_1)=\gamma_R(G)+n-b_2(G).$$
\end{theorem}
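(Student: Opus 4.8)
The plan is to establish the identity by proving two inequalities, each via a direct construction that transfers an optimal Roman dominating function between $G$ and $G\odot K_1$. Throughout, write $V=\{v_1,\dots,v_n\}$ for the vertices of $G$ and let $u_i$ denote the unique vertex of the $i^{\text{th}}$ copy of $K_1$, so $u_i\sim v_i$ and $u_i$ has degree $1$ in $G\odot K_1$.

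For the upper bound $\gamma_R(G\odot K_1)\le \gamma_R(G)+n-b_2(G)$, I would start from an optimal Roman dominating function $f$ on $G$ with induced partition $\{B_0,B_1,B_2\}$ and extend it to $G\odot K_1$ as follows. Keep $f$ unchanged on $V$. For each pendant vertex $u_i$: if $v_i\in B_2$, set $f(u_i)=0$ (it is dominated by $v_i$); if $v_i\in B_1\cup B_0$, set $f(u_i)=1$. One must check this is a Roman dominating function: a vertex $v_i\in B_0$ still has a neighbour in $B_2$ inside $G$ (since $f$ was Roman on $G$) and we have not removed any $2$'s, so it is still dominated; the pendants assigned $0$ are exactly those attached to $B_2$-vertices. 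The added weight is $1$ for every $v_i\notin B_2$, i.e.\ $n-b_2(G)$, giving weight $\gamma_R(G)+n-b_2(G)$; hence the upper bound. The subtlety here is that different optimal functions $f$ have different values of $b_2(G)$, so to get the \emph{sharpest} statement one should pick $f$ to maximize $b_2(G)$ among all minimum-weight Roman dominating functions — this is why the theorem is phrased as ``there exists a Roman dominating function of minimum weight such that''. (Using Lemma~\ref{lema-roman} one can also note $b_2(G)$ is well defined and finite; the existence of a maximizer is immediate since there are finitely many partitions.)

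For the lower bound, let $g$ be an optimal Roman dominating function on $G\odot K_1$, with value $\gamma_R(G\odot K_1)$. The key local observation is that for each $i$, the pair $\{v_i,u_i\}$ must ``pay'' at least $1$: since $u_i$ has $v_i$ as its only neighbour, either $g(u_i)\ge 1$, or $g(u_i)=0$ and then $g(v_i)=2$; in every case $g(v_i)+g(u_i)\ge 1$, and if $g(v_i)=0$ then necessarily $g(u_i)\ge 1$. I would use this to construct a Roman dominating function $g'$ on $G$: set $g'(v_i)=g(v_i)$ if $g(v_i)\ge 1$, and $g'(v_i)=1$ if $g(v_i)=0$ (which forces $g(u_i)\ge 1$, so we are ``borrowing'' weight that $g$ already spent on $u_i$). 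Then $g'$ is Roman on $G$ (vertices with $g'=0$ never occur, so the Roman condition is vacuous — actually $g'$ takes only values $1,2$, making it trivially Roman), and a weight comparison shows $\sum_{v\in V}g'(v)\le \sum_{i}(g(v_i)+g(u_i)) = \gamma_R(G\odot K_1)$, while on the other hand one extracts the term $n-b_2$ by counting how many indices contribute a pendant weight of $1$ versus $0$. Equating the two bounds and choosing the partition realizing the extremal $b_2(G)$ pins down the formula.

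The main obstacle will be the bookkeeping in the lower bound: one needs to match, index by index, the weight that $g$ spends on $\{v_i,u_i\}$ against $g'(v_i)$ plus the ``expected'' pendant cost ($0$ when $v_i\in B_2$, $1$ otherwise), and verify no deficit arises in any of the cases ($g(v_i)\in\{0,1,2\}$ combined with $g(u_i)\in\{0,1,2\}$). In particular the case $g(v_i)=1,g(u_i)=0$ is borderline — there $v_i$ may fail to be in $B_2$ yet contributes no pendant slack — and one must argue that such configurations can be rerouted (e.g.\ $u_i$ with $g(u_i)=0$ forces $g(v_i)=2$, so this case does not actually occur), which is exactly where the pendant structure of $K_1$ is essential. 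Once these local cases are dispatched, summing over $i$ and invoking the optimal choice of $\{B_0,B_1,B_2\}$ closes the argument.
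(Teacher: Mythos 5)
Your upper bound is exactly the paper's construction: keep a minimum-weight Roman dominating function on $V$, assign $1$ to the pendant of every vertex of $B_0\cup B_1$ and $0$ to the pendant of every vertex of $B_2$, for an added cost of $n-b_2(G)$. That half is fine and coincides with the paper.

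The lower bound, however, has a genuine gap, and it sits precisely in the bookkeeping you defer. By bumping zeros up, i.e.\ setting $g'(v_i)=\max\{g(v_i),1\}$, you inflate the weight on $V$ by one for every vertex of $G$ with $g(v_i)=0$, and those are exactly the indices where your index-by-index ledger fails: if $g(v_i)=0$ then (as you note) $g(u_i)\ge 1$, so the pair $\{v_i,u_i\}$ may carry total weight only $1$, while your target $g'(v_i)+1=2$. This case really occurs. Take $G=K_2$ with vertices $v_1,v_2$, so $G\odot K_1$ is the path $u_1v_1v_2u_2$; the optimal function $g(v_1)=2$, $g(u_2)=1$, $g(v_2)=g(u_1)=0$ has weight $3=\gamma_R(G\odot K_1)$, but your $g'$ has $w(g')=3>\gamma_R(K_2)=2$ and $w(g')+n-b_2(g')=4>3$. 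So the inequality your plan needs, $\gamma_R(G\odot K_1)\ge w(g')+n-b_2(g')$, is false in general, while the inequality you actually establish, $w(g')\le \gamma_R(G\odot K_1)$, only yields $\gamma_R(G)\le\gamma_R(G\odot K_1)$. Note also that the borderline case you single out, $g(v_i)=1$ with $g(u_i)=0$, is indeed impossible (a pendant with value $0$ forces $g(v_i)=2$), but it is not where the deficit arises; the problematic case $g(v_i)=0$, $g(u_i)=1$ is untouched by that observation. The paper avoids the trap by \emph{not} raising the zeros: it keeps $h'(v_i)=g(v_i)$ and shows $h'$ is already a Roman dominating function on $G$, because one may assume no pendant carries the value $2$, so a vertex $v_l$ with $g(v_l)=0$ must have a $2$-valued neighbour inside $G$ itself; then $\sum_i g(u_i)\ge n-b_2(h')$ (since $g(u_i)=0$ forces $g(v_i)=2$) together with $\sum_{v\in V}h'(v)\ge\gamma_R(G)$ gives the bound. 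To repair your argument you should drop the bumping, prove this restriction statement (including the normalization $g(u_i)\le 1$, which both you and the paper use implicitly), and only then do the pendant count.
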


\begin{proof}
Let $f'$ be a Roman dominating function on $G$ of minimum weight.
Let $v_i$ be a vertex of $G$ and we will denote by $u_i$ the pendant
vertex of $v_i$ corresponding to the $i^{th}$ copy of $K_1$ in
$G\odot K_1$. Hence, we define a function $f$ on $G\odot K_1$ in the
following way:
\begin{itemize}
\item For every $v_i\in V$, we have that $f(v_i)=f'(v_i)$.
\item If $f'(v_i)=0$ or $f'(v_i)=1$ for some $i\in \{1,...,n\}$, then $f(u_i)=1$.
\item If $f'(v_j)=2$ for some $j\in \{1,...,n\}$, then $f(u_j)=0$.
\end{itemize}
Thus, it is clear that $f$ is a Roman dominating function for $G\odot K_1$ and the weight of $f$ is given by
\begin{align*}
f(V\cup(\cup_{i=1}^n\{u_i\}))&=\sum_{u\in
V\cup(\cup_{i=1}^n\{u_i\})}f(u)\\
&=f'(V)+f(\cup_{i=1}^n\{u_i\})\\
&=2b_2(G)+b_1(G)+f\left(\cup_{i=1}^n\{u_i\}\right)\\
&=2b_2(G)+2b_1(G)+b_0(G).
\end{align*}
Now, since $\gamma_R(G)=2b_2(G)+b_1(G)$ and $b_0(G)+b_1(G)+b_2(G)=n$
we obtain that
$$\gamma_R(G\odot K_1)\le f(V\cup(\cup_{i=1}^n\{u_i\}))=\gamma_R(G)+n-b_2(G).$$

On the other side, let $h$ be a Roman dominating function on $G\odot
K_1$ of minimum weight and let $h'$ be a function on $G$ such that
for every $v_i\in V$ we have that $h'(v_i)=h(v_i)$. Now, if
$h'(v_l)=0$, for some $l\in \{1,...,n\}$, then as for every vertex
$u_i$, $i\in \{1,... ,n\}$, it is satisfied that $h(u_i)\ne 2$,
there exists $v_j\in N_G(v_l)$, $j\ne l$, such that $h(v_j)=2$. So,
$h'(v_j)=2$ and $h'$ is a Roman dominating function on $G$.

Moreover, we have the following facts:
\begin{itemize}
\item If $h(u_j)=0$ for some $j\in \{1,...,n\}$, then $h(v_i)=h'(v_i)=2$.
\item If $h(u_l)=1$ for some $l\in \{1,...,n\}$, then $h(v_l)=h'(v_l)=0$ or
$h(v_l)=h'(v_l)=1$.
\end{itemize}
Thus, we obtain that
\begin{align*}
\gamma_R(G\odot K_1)&=\sum_{u\in V\cup(\cup_{i=1}^n\{u_i\})}h(u)\\
&=\sum_{u\in V}h(u)+\sum_{u\in \cup_{i=1}^n\{u_i\}}h(u)\\
&=\sum_{u\in V}h'(u)+\sum_{u\in \cup_{i=1}^n\{u_i\}}h(u)\\
&\ge \gamma_R(G)+\sum_{u\in \cup_{i=1}^n\{u_i\}}h(u)\\
&=\gamma_R(G)+b_0(G)+b_1(G)\\
&=\gamma_R(G)+n-b_2(G).
\end{align*}
Therefore, the result follows.
\end{proof}

Notice that the above result gives a formula for the Roman
domination number of $G\odot K_1$, but such a formula depends on
$b_2(G)$, which is unknown in general. Thus, the formula leads to
the conclusion that obtaining the Roman domination number of $G\odot
K_1$ could be very difficult even if it is knew the Roman domination
number of $G$.

\begin{corollary}
Let $G$ be a graph of order $n\ge 2$, different from $\overline{K_n}$. Then,
$$\gamma_R(G)+\frac{n}{2}\le \gamma_R(G\odot K_1)\le \gamma_R(G)+n-1.$$
\end{corollary}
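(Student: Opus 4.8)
The plan is to derive both inequalities from the exact formula $\gamma_R(G\odot K_1)=\gamma_R(G)+n-b_2(G)$ established in the preceding theorem, by bounding the quantity $b_2(G)$ from above and below over all minimum-weight Roman dominating functions of $G$. Recall that the theorem guarantees the existence of a minimum-weight Roman dominating function $\{B_0,B_1,B_2\}$ of $G$ realizing that identity, so it suffices to control $b_2(G)=|B_2|$ for a suitable choice of such a function. The upper bound $\gamma_R(G\odot K_1)\le\gamma_R(G)+n-1$ amounts to showing $b_2(G)\ge 1$, i.e. that some minimum-weight Roman dominating function uses the label $2$ at least once; the lower bound $\gamma_R(G\odot K_1)\ge\gamma_R(G)+n/2$ amounts to showing $b_2(G)\le n/2$.

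For the upper bound, I would argue that if $G$ is not $\overline{K_n}$ then $G$ has at least one edge, hence $\delta\ge 1$ (after possibly permuting, some vertex $v$ has a neighbor). If every minimum-weight Roman dominating function of $G$ had $B_2=\emptyset$, then $B_0=\emptyset$ as well (a vertex labeled $0$ needs a neighbor labeled $2$), so such a function would have weight exactly $n$, assigning $1$ to every vertex. But then replacing the labels on one vertex $v$ of positive degree and one neighbor $u$ by $f(v)=2$, $f(u)=0$ yields a Roman dominating function of weight $n-1<n$, contradicting minimality. Hence $b_2(G)\ge 1$ for the function furnished by the theorem (or, more carefully, there is a minimum-weight Roman dominating function with $b_2\ge 1$, and the theorem's proof can be run with that one), giving $\gamma_R(G\odot K_1)=\gamma_R(G)+n-b_2(G)\le\gamma_R(G)+n-1$.

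For the lower bound, I would bound $b_2(G)$ using the sandwich $\gamma(G)\le\gamma_R(G)\le 2\gamma(G)$ from Lemma \ref{lema-roman}, together with the two identities $\gamma_R(G)=2b_2(G)+b_1(G)$ and $b_0(G)+b_1(G)+b_2(G)=n$. From these, $2b_2(G)\le\gamma_R(G)$, so $b_2(G)\le\gamma_R(G)/2$. It remains to see this forces $b_2(G)\le n/2$; since $\gamma_R(G)\le n$ always holds (labeling all vertices $1$ is a Roman dominating function), we indeed get $b_2(G)\le n/2$, whence $\gamma_R(G\odot K_1)=\gamma_R(G)+n-b_2(G)\ge\gamma_R(G)+n-n/2=\gamma_R(G)+n/2$.

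The main obstacle I anticipate is the subtlety in the upper bound: the theorem only asserts the existence of \emph{some} minimum-weight Roman dominating function realizing the formula, so I must check that the exchange argument (swapping a pair to create a label $2$) either applies to that particular function or, preferably, show directly that for any $G\ne\overline{K_n}$ there is a minimum-weight Roman dominating function with at least one $2$, and then invoke the theorem's construction with that choice. Verifying that the swap does not increase the weight and still leaves a valid Roman dominating function is the only place requiring genuine care; the rest is bookkeeping with the two linear identities and the known bound $\gamma_R(G)\le n$.
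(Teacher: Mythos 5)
Your overall route is exactly the paper's: the corollary is read off from the identity $\gamma_R(G\odot K_1)=\gamma_R(G)+n-b_2(G)$ together with the bounds $1\le b_2(G)\le \frac{n}{2}$ (the paper states precisely this and nothing more), and your handling of the lower bound is fine, since $2b_2(G)\le 2b_2(G)+b_1(G)=\gamma_R(G)\le n$ for any minimum-weight Roman dominating function. Your caution about which minimum-weight function the theorem's identity refers to is also well placed: the clean way out is the one you suggest, namely that the upper-bound half of the theorem's proof works for an arbitrary minimum-weight Roman dominating function $f'$ on $G$, so one may feed it a function with $b_2\ge 1$.

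There is, however, an arithmetic slip in your argument that such a function exists. If every minimum-weight Roman dominating function had $B_2=\emptyset$, then indeed the only candidate is the all-ones function of weight $n$; but replacing the labels of an adjacent pair $u\sim v$ by $f(v)=2$, $f(u)=0$ replaces a contribution of $1+1$ by $2+0$, so the new weight is $n$, not $n-1$, and there is no contradiction with minimality. The argument is easily repaired: since $G\ne\overline{K_n}$ has an edge, the swapped function is a Roman dominating function of the same weight $n=\gamma_R(G)$, hence itself of minimum weight, and it has $b_2\ge 1$ --- contradicting the assumption that all minimum-weight functions avoid the label $2$ (equivalently, it directly furnishes the function needed for the upper-bound construction). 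With that correction your proof is complete and coincides in substance with the paper's.
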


\begin{proof}
The result follows from the above theorem and the fact that $1\le
b_2(G)\le \frac{n}{2}$.
\end{proof}

Notice that if $G$ is the star graph $S_{1,n}$, then the upper bound
of the above theorem is attained.

\section{Location-domination}

The concepts about resolvability and location in graphs were introduced independently by Harary and Melter \cite{harary} and Slater
\cite{leaves-trees}, respectively, to define the same structure in a graph. In the present work we will use the terminology of \cite{leaves-trees}. The domination parameter related to location in graphs can be seen in two different ways as it was presented in \cite{loc-dom-1,loc-dom-2} and \cite{res-dom}, respectively. Given a set $S=\{v_1,v_2,...,v_t\}$ of vertices of a graph $G$, we say that $S$ is a {\em locating set} (or resolving set) if for every two different vertices $u,v$ of $G$ it is satisfied that $(d(u,v_1),d(u,v_2),...,d(u,v_t))\ne (d(v,v_1),d(v,v_2),...,d(v,v_t))$ where $d(x,v_i)$ represents the distance between the vertices $x$ and $v_i$, for every $x\in \{u,v\}$ and $i\in \{1,...,t\}$. The minimum cardinality of any locating set of $G$ is called the {\em location number} (or {\em metric dimension}) of $G$ and it is denoted by $dim(G)$.

Also, a set $D$ of vertices of $G$ is {\em locating dominating} (or resolving dominating) if it is locating (or resolving) and dominating. The minimum cardinality of any locating dominating set of $G$ is called the {\em location domination number} of $G$ and it is denoted by $\gamma_{ld}$. On the other hand the set $D$ is {\em locating-dominating} if for every pair of different vertices $u,v\in \overline{D}$ it is satisfied that $N_D(u)\ne N_D(v)$. The minimum cardinality of any locating-dominating set of $G$ is called the {\em location-domination number} of $G$ and it is denoted by $\gamma_{l-d}(G)$. In this sense the following inequalities chain is satisfied for any connected graph $G$.

$$dim(G)\le \gamma_{ld}(G)\le \gamma_{l-d}(G).$$

For the case of corona graphs, as it was studied in \cite{dim-corona}, we have that every locating set is also a locating dominating set of $G$. Thus, we obtain that $dim(G)=\gamma_{ld}(G)$. So, in the present section we will be centered into studying the locating-dominating number of corona graphs.

\begin{lemma}\label{lema-loc-dom-1}
Let $G$ and $H$ be connected graphs. Let $S$ be a locating-dominating set of minimum cardinality in $G\odot H$ and let $v_i$ be a vertex of $G$. Then,
\begin{itemize}
\item[{\rm (i)}] If $v_i\notin S$, then $S\cap V_i$ is a locating-dominating set in the copy $H_i$ of $H$ in $G\odot H$.
\item[{\rm (ii)}] If $v_i\in S$, then $S\cap (V_i\cup \{v_i\})$ is a locating-dominating set in the subgraph $K_1\odot H_i$.
\end{itemize}
\end{lemma}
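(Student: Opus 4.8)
The plan is to prove both parts by a direct argument, showing in each case that the relevant restriction of $S$ is simultaneously \emph{dominating} (every outside vertex has a neighbor in the set) and \emph{locating} (distinct outside vertices have distinct neighborhoods in the set) within the indicated subgraph. The key structural fact I would exploit repeatedly is that, in $G\odot H$, the only edges incident with a vertex of the copy $H_i$ go either to other vertices of $H_i$ or to the single vertex $v_i\in V$; consequently, for a vertex $u\in V_i$, its neighborhood inside $S$ decomposes as $N_S(u)=N_{S\cap V_i}(u)$ together with possibly $\{v_i\}$, the latter occurring precisely when $v_i\in S$.

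For part~(i), assume $v_i\notin S$. First I would show $S\cap V_i$ dominates $H_i$: take $u\in V_i\setminus S$; since $S$ is dominating in $G\odot H$ and $u$'s only neighbors are in $V_i$ and $v_i\notin S$, some neighbor of $u$ in $V_i$ lies in $S$, hence in $S\cap V_i$. Next, locating: for distinct $u,w\in V_i\setminus S$, because $v_i\notin S$ we have $N_S(u)=N_{S\cap V_i}(u)$ and $N_S(w)=N_{S\cap V_i}(w)$; since $S$ is locating-dominating in $G\odot H$, $N_S(u)\ne N_S(w)$, and therefore $N_{S\cap V_i}(u)\ne N_{S\cap V_i}(w)$. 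Thus $S\cap V_i$ is a locating-dominating set of $H_i$.

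For part~(ii), assume $v_i\in S$, and work inside $K_1\odot H_i$, the subgraph on $V_i\cup\{v_i\}$ in which $v_i$ is joined to all of $V_i$. Here $v_i\in S\cap(V_i\cup\{v_i\})$, so the only vertices whose neighborhoods must be checked are those in $V_i\setminus S$. Domination is immediate: every $u\in V_i\setminus S$ is adjacent to $v_i\in S$. For locating, take distinct $u,w\in V_i\setminus S$. In $G\odot H$ we have $N_S(u)=(N_{S\cap V_i}(u))\cup\{v_i\}$ and likewise for $w$, and these differ since $S$ is locating in $G\odot H$; removing the common element $v_i$ preserves the inequality, so $N_{S\cap V_i}(u)\ne N_{S\cap V_i}(w)$, which is exactly the condition $N_{S\cap(V_i\cup\{v_i\})}(u)\ne N_{S\cap(V_i\cup\{v_i\})}(w)$ inside $K_1\odot H_i$. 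Hence $S\cap(V_i\cup\{v_i\})$ is a locating-dominating set of $K_1\odot H_i$.

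The only subtle point—the part I would treat most carefully rather than the routine verifications above—is making sure the neighborhood computations are relative to the correct ambient graph: a vertex $u\in V_i$ has the \emph{same} set of neighbors in $V_i\cup\{v_i\}$ whether we view it inside $G\odot H$ or inside $H_i$ (resp.\ $K_1\odot H_i$), precisely because no vertex of $V_i$ is adjacent to anything outside $V_i\cup\{v_i\}$. Once this is stated explicitly, both parts follow by the short arguments given; connectedness of $G$ and $H$ is used only to ensure the subgraphs in question are themselves connected so that ``locating-dominating set'' is the appropriate notion to speak of.
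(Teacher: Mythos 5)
Your proof is correct and follows essentially the same route as the paper's: both exploit that every vertex of $V_i$ has all of its neighbors inside $V_i\cup\{v_i\}$, so $N_S(x)$ coincides with the neighborhood of $x$ in $S\cap V_i$ (when $v_i\notin S$) or in $S\cap(V_i\cup\{v_i\})$ (when $v_i\in S$), and the separating condition transfers immediately. The only difference is that you also verify domination explicitly, which the paper omits because its stated definition of locating-dominating only demands distinct $S$-neighborhoods; your extra check is harmless and even slightly more careful.
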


\begin{proof}
Let $S_i=S\cap V_i$. If $v_i\notin S$, then as every vertex $x\in V_i$ is adjacent to only one vertex not in $V_i$ (the vertex $v_i$) it is satisfied that $N_S(x)=N_{S_i}(x)$. So, for every two different vertices $u,v\in \overline{S_i}$ in $H_i$ we have that $N_{S_i}(u)\ne N_{S_i}(v)$ and, as a consequence, (i) follows.

Now, let us suppose that $v_i\in S$ and let $S'_i=S\cap (V_i\cup\{v_i\})$. Let $v_i$ be the vertex of $K_1$. Hence, for every vertex $x\in \overline{S'_i}$ in $K_1\odot H_i$ it is satisfied that $N_{S'_i}(x)=\{v_i\}\cup (S\cap V_i)=N_S(x)$. Thus, for every two different vertices $u,v\in \overline{S'_i}$ we have
$$N_{S'}(u)=\{v_i\}\cup (S\cap V_i)=N_S(u)\ne N_S(v)= \{v_i\}\cup (S\cap V_i)=N_{S'}(v).$$
Thus, (ii) follows.
\end{proof}

\begin{lemma}\label{lema-loc-dom-2}
For any graph $H$,
\begin{itemize}
\item[{\rm (i)}] If there exist a locating-dominating set $A$ of minimum cardinality in $H$ such that for every vertex $v\in \overline{A}$ it is satisfied that $N_{A}(v)\subsetneq A$, then  $\gamma_{l-d}(K_1\odot H)=\gamma_{l-d}(H)$.
\item[{\rm (ii)}] If for any locating-dominating set $B$ of minimum cardinality in $H$ there exists a vertex $u\in \overline{B}$ such that $N_{B}(u)= B$, then $\gamma_{l-d}(K_1\odot H)=\gamma_{l-d}(H)+1$.
\end{itemize}
\end{lemma}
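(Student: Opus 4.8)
The plan is to prove, for an arbitrary graph $H$, the general two‑sided estimate
$$\gamma_{l-d}(H)\le \gamma_{l-d}(K_1\odot H)\le \gamma_{l-d}(H)+1,$$
and then decide in each of (i) and (ii) which side is attained. Write $w$ for the vertex of $K_1$; in $K_1\odot H$ it is adjacent to every vertex of the copy $H_1$ of $H$, so for any $S\subseteq V(H_1)$ one has $N_S(w)=S$, while for any $x\in V(H_1)$ one has $N_S(x)=N^{H}_S(x)$ (the neighbours of $x$ inside $S$ are computed within $H$ because $w\notin S$). These two identities are essentially all that will be used.

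For the upper bounds I would proceed directly. In case (i), the prescribed locating‑dominating set $A$ of $H$ is already a locating‑dominating set of $K_1\odot H$: the vertex $w$ is the only element of $\overline{A}$ outside $V(H_1)$ and $N_A(w)=A$, while every $x\in V(H_1)\setminus A$ has $N_A(x)=N^H_A(x)\subsetneq A$; since $A$ also separates the vertices of $V(H_1)\setminus A$, it separates all of $\overline A$, and so $\gamma_{l-d}(K_1\odot H)\le |A|=\gamma_{l-d}(H)$. In case (ii), for a minimum locating‑dominating set $B$ of $H$ the set $B\cup\{w\}$ works in $K_1\odot H$, because any two vertices of $V(H_1)\setminus B$ are already separated by $B$; hence $\gamma_{l-d}(K_1\odot H)\le \gamma_{l-d}(H)+1$.

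For the lower bound I would take a minimum locating‑dominating set $D$ of $K_1\odot H$ and split on whether $w\in D$. If $w\notin D$, then $D\subseteq V(H_1)$, and since every $x\in V(H_1)\setminus D$ has the same neighbours in $D$ whether viewed in $H$ or in $K_1\odot H$, the set $D$ is locating‑dominating in $H$ (this is the content of Lemma~\ref{lema-loc-dom-1}(i)); thus $|D|\ge\gamma_{l-d}(H)$, and moreover separating $w$ from every $x$ forces $N^H_D(x)=N_D(x)\ne N_D(w)=D$, i.e. we are exactly in the situation of (i). If $w\in D$, put $D'=D\setminus\{w\}$; then $D'$ separates all vertices of $V(H_1)\setminus D'$ in $H$, and at most one vertex $v_0\notin D'$ can fail to be dominated by $D'$ (two such vertices would both have $N_D=\{w\}$ in $K_1\odot H$), so either $D'$ or $D'\cup\{v_0\}$ is a locating‑dominating set of $H$, and in both cases $|D|=|D'|+1\ge\gamma_{l-d}(H)$. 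This proves $\gamma_{l-d}(K_1\odot H)\ge\gamma_{l-d}(H)$, which together with the case (i) upper bound yields (i).

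It remains to sharpen the lower bound in case (ii), and this is where the real work lies. Assume $\gamma_{l-d}(K_1\odot H)=\gamma_{l-d}(H)$ and let $D$ be a minimum locating‑dominating set of $K_1\odot H$. If $w\notin D$, then by the previous paragraph $D$ is a minimum locating‑dominating set of $H$; the hypothesis of (ii) applied to $D$ gives $u\in\overline{D}$ with $N^H_D(u)=D$, and then $u$ and $w$ are distinct vertices of $\overline{D}$ in $K_1\odot H$ with $N_D(u)=D=N_D(w)$, a contradiction. So $w\in D$; then $D'=D\setminus\{w\}$ has size $\gamma_{l-d}(H)-1$, hence is not a locating‑dominating set of $H$, so there is a unique $D'$‑undominated vertex $v_0\notin D'$ and $B:=D'\cup\{v_0\}$ is a minimum locating‑dominating set of $H$. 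Applying the hypothesis of (ii) to $B$ yields $u\notin B$ with $N^H_B(u)=B$, so $u$ is adjacent to every vertex of $D'$ and to $v_0$. The key step is to observe that the swapped set $B':=D'\cup\{u\}$ is again a minimum locating‑dominating set of $H$: it dominates $v_0$ through $u$ and every other vertex of $\overline{B'}$ through $D'$, and it separates vertices exactly as $D'$ does. Applying the hypothesis of (ii) once more, now to $B'$, gives $z\notin B'$ with $N^H_{B'}(z)=B'$; but then $u$ and $z$ are two distinct vertices of $V(H_1)\setminus D'$ with $N^H_{D'}(u)=D'=N^H_{D'}(z)$, contradicting that $D'$ separates $V(H_1)\setminus D'$. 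Hence $\gamma_{l-d}(K_1\odot H)\ge\gamma_{l-d}(H)+1$, and with the upper bound this gives (ii). The main obstacle is precisely this last case $w\in D$: tracking the at most one undominated vertex and checking that the swap $v_0\leftrightarrow u$ preserves both domination and separation; everything else is routine use of the two neighbourhood identities above.
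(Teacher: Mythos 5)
Your proof is correct, but it reaches the lower bounds by a genuinely different route than the paper. The paper argues directly about an arbitrary minimum locating-dominating set $S$ of $K_1\odot H$: in case (i) it asserts that the $K_1$-vertex $v$ cannot lie in $S$, and in case (ii) it asserts that $S$ must contain $B$ together with either $v$ or the vertex $u$ with $N_B(u)=B$ — structural claims stated with little justification. You instead first establish the general sandwich $\gamma_{l-d}(H)\le\gamma_{l-d}(K_1\odot H)\le\gamma_{l-d}(H)+1$ by splitting on whether $w\in D$, using that deleting $w$ preserves separation and leaves at most one undominated vertex; this already gives (i). Then you rule out equality in case (ii) by contradiction, applying the hypothesis twice — once to $B=D'\cup\{v_0\}$ and once to the swapped set $B'=D'\cup\{u\}$ — to produce two distinct vertices $u,z\notin D'$ with $N^H_{D'}(u)=D'=N^H_{D'}(z)$, contradicting the separation property inherited from $D$. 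Your upper-bound constructions are essentially the paper's (you use $B\cup\{w\}$ where the paper uses $B'\cup\{u'\}$; both work). What your approach buys is rigor and self-containment: it fills in exactly the steps the paper leaves vague (why $w\notin S$ in (i), and why the cardinality must jump in (ii)), at the cost of a longer case analysis; the paper's argument is shorter but relies on unproved structural assertions about minimum sets in $K_1\odot H$.
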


\begin{proof}
Let $S$ be a locating-dominating set of minimum cardinality in $K_1\odot H$ and let $v$ be the vertex of $K_1$. Now, if there exist a locating-dominating set $A$ of minimum cardinality in $H$ such that for every vertex $v\in \overline{A}$, $N_{A}(v)\subsetneq A$, then since $v\sim u$ for every vertex $u$ of $H$, it is satisfied that $v\notin S$. So, for any two different vertices $x,y\in \overline{S}-\{v\}$ we have that $N_S(x)\ne N_S(y)$. Thus, $S$ is a locating-dominating set in $H$ and $\gamma_{l-d}(K_1\odot H)\ge \gamma_{l-d}(H)$. Now, if there exist a locating-dominating set $A$ of minimum cardinality in $H$ such that for every vertex $v\in \overline{A}$ it is satisfied that $N_{A}(v)\subsetneq A$, then it is clear that $A$ is also a locating-dominating set in $K_1\odot H$. So, $\gamma_{l-d}(K_1\odot H)\le \gamma_{l-d}(H)$ and (i) follows.

On the other side, let us suppose that for every locating-dominating set $B$ of minimum cardinality in $H$ there exists a vertex $u\in \overline{B}$ such that $N_{B}(u)= B$. Since for the vertex $v$ of $K_1$ it is satisfied that $N_B(v)=B$ we obtain that any locating-dominating set of minimum cardinality in $K_1\odot H$ must contain the set $B$ and either the vertex $v$ or the other vertex $u$ of $H$ such that $N_B(u)=B$. So, $\gamma_{l-d}(K_1\odot H)\ge \gamma_{l-d}(H)+1$. On the other hand, if $B'$ is a locating-dominating set of minimum cardinality in $H$ such that there exists a vertex $u'\in \overline{B'}$ with $N_{B'}(u')= B'$, then it is easy to check that $B'\cup \{u'\}$ is a locating-dominating set in $K_1\odot H$. Therefore, $\gamma_{l-d}(K_1\odot H)\le \gamma_{l-d}(H)+1$ and (ii) follows.
\end{proof}

\begin{theorem}\label{valor-loc-dom}
For any connected graph $G$ of order $n$ and any connected graph $H$,
\begin{itemize}
\item[{\rm (i)}] If there exist a locating-dominating set $A$ of minimum cardinality in $H$ such that for every vertex $v\in \overline{A}$ it is satisfied that $N_{A}(v)\subsetneq A$, then  $\gamma_{l-d}(G\odot H)=n\gamma_{l-d}(H).$
\item[{\rm (ii)}] If for any locating-dominating set $B$ of minimum cardinality in $H$ there exists a vertex $u\in \overline{B}$ such that $N_{B}(u)= B$, then $\gamma_{l-d}(G\odot H)=n\gamma_{l-d}(H)+\gamma(G).$
\end{itemize}
\end{theorem}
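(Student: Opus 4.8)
The plan is to prove, for each of the two parts, matching lower and upper bounds on $\gamma_{l-d}(G\odot H)$. The lower bounds will come from the fact that the $n$ sets $V_i\cup\{v_i\}$ partition the vertex set of $G\odot H$, so that for a minimum locating-dominating set $S$ one can estimate $|S\cap(V_i\cup\{v_i\})|$ block by block via Lemmas \ref{lema-loc-dom-1} and \ref{lema-loc-dom-2}. The upper bounds will come from exhibiting an explicit locating-dominating set and checking the locating and dominating conditions for all relevant pairs of vertices.

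For part (i): if $S$ is a minimum locating-dominating set of $G\odot H$ then, by Lemma \ref{lema-loc-dom-1}, for each $i$ either $v_i\notin S$ and $S\cap V_i$ is locating-dominating in $H_i$, or $v_i\in S$ and $S\cap(V_i\cup\{v_i\})$ is locating-dominating in $K_1\odot H_i$; in the second case Lemma \ref{lema-loc-dom-2}(i) gives $\gamma_{l-d}(K_1\odot H)=\gamma_{l-d}(H)$. Either way $|S\cap(V_i\cup\{v_i\})|\ge\gamma_{l-d}(H)$, and summing over the $n$ disjoint blocks yields $\gamma_{l-d}(G\odot H)\ge n\gamma_{l-d}(H)$. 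For the reverse inequality, let $A$ be the distinguished minimum locating-dominating set of $H$, let $A_i$ be its copy in $H_i$, and take $\bigcup_{i}A_i$. Domination is clear (each $v_i$ sees the nonempty set $A_i$, and each vertex of $V_i\setminus A_i$ is dominated inside $H_i$), and for the locating property the only pair needing the hypothesis is $v_i$ against a vertex $y\in V_i\setminus A_i$: since $N_A(y)\subsetneq A$ we get $N_{A_i}(y)\subsetneq A_i=N_{\bigcup_j A_j}(v_i)$; all other pairs are separated either because $A$ is locating in $H$ (same copy) or because their neighbourhoods sit in distinct copies.

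For part (ii): Lemma \ref{lema-loc-dom-2}(ii) now gives $\gamma_{l-d}(K_1\odot H)=\gamma_{l-d}(H)+1$. For the upper bound, pick a minimum dominating set $D$ of $G$ and a minimum locating-dominating set $B$ of $H$, and take $S=D\cup\bigcup_i B_i$, of cardinality $n\gamma_{l-d}(H)+\gamma(G)$. The verification of the locating property is again by cases; the decisive point is that when $v_i\notin D$ the vertex $v_i$ has a neighbour in $D$, so $N_S(v_i)$ meets $V$, whereas for any $y\in V_i\setminus B_i$ the set $N_S(y)$ is disjoint from $V$ (its only neighbour outside $V_i$ is $v_i\notin S$), which separates $v_i$ from the copy vertices even though $B$ may fail the property in (i). For the lower bound, let $S$ be a minimum locating-dominating set, put $D=S\cap V$, and let $W$ be the set of vertices $v_j\notin S$ having no neighbour in $D$. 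Each block with $v_i\in S$ contributes at least $\gamma_{l-d}(H)+1$; I claim each block with $v_j\in W$ does too, for if $|S\cap V_j|=\gamma_{l-d}(H)$ then $S\cap V_j$ is a minimum locating-dominating set of $H_j$, so by hypothesis there is $u_j\in V_j\setminus S$ with $N_{S\cap V_j}(u_j)=S\cap V_j$, and then $N_S(u_j)=S\cap V_j=N_S(v_j)$ (using $v_j\in W$ and $v_j\notin S$), contradicting that $S$ locates. Hence $|S|\ge |D|+n\gamma_{l-d}(H)+|W|$, and since $D\cup W$ dominates $G$ and the union is disjoint, $|D|+|W|\ge\gamma(G)$, which closes the argument.

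The routine parts are the several small neighbourhood comparisons in the two upper-bound constructions. The genuinely delicate step, which I would treat most carefully, is the lower bound in part (ii): one has to observe that a vertex $v_j$ of $G$ that is not dominated by $S\cap V$ forces one extra selected vertex inside its own copy $H_j$, and then convert this surplus into exactly $\gamma(G)$ by noting that $S\cap V$ together with these undominated vertices is a dominating set of $G$.
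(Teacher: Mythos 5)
Your proof is correct, and for part (i) and for the upper bound in part (ii) it follows the paper's route essentially verbatim: the same block decomposition via Lemmas \ref{lema-loc-dom-1} and \ref{lema-loc-dom-2}, the same construction $\bigcup_i A_i$ for (i), and the same $D\cup\bigcup_i B_i$ with the ``$N_S(v_i)$ meets $V$ while $N_S(y)\subseteq V_i$'' separation for (ii). Where you genuinely diverge is the lower bound in (ii). The paper first normalizes $S$ by swapping a copy vertex $u_i$ (with $N_L(u_i)=L$) for $v_i$ when possible, then defines the set $A$ of those $v_j$ that either lie in $S$ or whose copy contains such a special vertex of $S$, claims $A$ dominates $G$, and counts $|S|\ge |A|+n\gamma_{l-d}(H)\ge\gamma(G)+n\gamma_{l-d}(H)$. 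You instead avoid any swapping: you set $D=S\cap V$, let $W$ be the $G$-vertices outside $S$ with no neighbour in $D$, show each $W$-block is forced to spend $\gamma_{l-d}(H)+1$ inside its copy (otherwise $S\cap V_j$ would be a minimum locating-dominating set of $H_j$, its full-neighbourhood vertex $u_j$ would satisfy $N_S(u_j)=S\cap V_j=N_S(v_j)$ with both outside $S$, a contradiction), and then convert the surplus into $\gamma(G)$ via the observation that $D\cup W$ dominates $G$ and is a disjoint union. This buys you something concrete: the paper's dichotomy ``$u_i\in S', v_i\notin S'$ or $v_i\in S', u_i\notin S'$'' is not exhaustive (both or neither may occur), and its claim that $A$ dominates $G$ is only justified when $S\cap V_j$ happens to be of minimum size; your $W$-bookkeeping absorbs exactly those cases (a non-minimum $S\cap V_j$ already pays the extra vertex), so your version of the (ii) lower bound is tighter in its case analysis and, in my view, closes small gaps that the paper leaves implicit, at the cost of no extra machinery.
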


\begin{proof}
Let $S_i$, $i\in \{1,...,n\}$, be a locating-dominating set of minimum cardinality in the copy $H_i=(V_i,E_i)$ of $H$ in $G\odot H$ such that for every vertex $v\in \overline{S_i}$ it is satisfied that $N_{S_i}(v)\subsetneq S_i$. Now, let $S=\bigcup_{i=1}^nS_i$ and let $x,y\in \overline{S}$ be two different vertices of $G\odot H$. If $x$ is a vertex of $G$ and $y$ is a vertex of a copy $H_i$ of $H$ in $G\odot H$ with $x\sim y$, then we have that
$$N_{S}(y)=N_{S_i}(y)\subsetneq S_i=N_S(x).$$
On the contrary, in any other cases for $x,y$ we have that $N_{S}(y)\ne N_S(x)$. Thus, $S$ is a locating-dominating set in $G\odot H$ and $\gamma_{l-d}(G\odot H)\le n\gamma_{l-d}(H)$.

On the other hand, let $S$ be a locating-dominating set in $G\odot H$. So, by Lemma \ref{lema-loc-dom-1}, for every $i\in \{1,...,n\}$ we have either $|S\cap V_i|\ge \gamma_{l-d}(H)$ or $|S\cap (V_i\cap\{v_i\})|\ge \gamma_{l-d}(K_1\odot H)$. Also, by Lemma \ref{lema-loc-dom-2} we have that $\gamma_{l-d}(K_1\odot H)\ge \gamma_{l-d}(H)$. Therefore, $$\gamma_{l-d}(G\odot H)=|S|\ge \sum_{i=1}^n\gamma_{l-d}(H)=n\gamma_{l-d}(H).$$

Therefore (i) follows. In order to prove (ii), let $S'_i$, $i\in \{1,...,n\}$, be a locating-dominating set in the copy $H_i$ of $H$ in $G\odot H$. Hence, there exists a vertex $u\in \overline{S'_i}$ such that $N_{S'_i}(u)= S'_i$. Let $D$ be a dominating set of minimum cardinality in $G$
and let $S'=D\cup (\bigcup_{i=1}^n S'_i)$. Now, let $x,y\in \overline{S'}$ such that $x$ is a vertex of $G$, $y$ is a vertex of a copy $H_i$ of $H$ and $x\sim y$.  So, if $y=u$, then $N_{S'}(y)=N_{S'_i}(y)$.
Since $D$ is a dominating set in $G$, there exists at least a vertex $v\in D\subset S$ such that $x\sim v$. Thus, $S_i\cup \{v\}\subseteq N_{S'}(x)$ and as a consequence, $N_{S'}(x)\ne N_{S'}(y)$. On the contrary, in any other case for $x,y\in \overline{S'}$ we have that $N_{S'}(x)\ne N_{'}S(y)$. Therefore, $S'$ is a locating-dominating set in $G\odot H$ and $\gamma_{l-d}(G\odot H)\le n\gamma_{l-d}(H)+\gamma(G).$

On the other side, let $S'$ be a locating-dominating set of minimum cardinality in $G\odot H$. For any locating-dominating set $L$ in $H$ there exists a vertex $u\in \overline{L}$ such that $N_L(u)=L$ and also for every $v_i\in V$ in $G\odot H$, $N_L(v_i)=L$. So, for every $v_i\in V$ there is $u_i\in V_i$ such that $N_L(u_i)=N_L(v_i)=L$. Since $S'$ is a locating-dominating set in $G\odot H$ we have either,

\begin{itemize}
\item $u_i\in S'$ and $v_i\notin S'$. In such a case by taking the set $S''=(S'-\{u_i\})+\{v_i\}$ we have that $S''$ is also a locating-dominating set of minimum cardinality in $G\odot H$,

\item or $v_i\in S'$ and $u_i\notin S'$.
\end{itemize}

Thus, let $A$ be the set of vertices of $G$ such that for every vertex $v_j\in A$ we have either $v_j\in S$ or there exists $u_j\in V_j$ such that $u_j\in S$ and $N_L(u_j)=L$ for any locating-dominating set $L$ of minimum cardinality in $H_j$. Thus, $A$ is a dominating set in $G$ and we obtain that
\begin{align*}
|S|&=\sum_{i=1}^n(S\cap (V_i\cup \{v_i\}))\\
&=\sum_{i=1}^{|A|}(S\cap (V_i\cup \{v_i\}))+\sum_{j=1}^{n-|A|}(S\cap V_i)\\
&\ge |A|(\gamma_{l-d}(H)+1)+(n-|A|)\gamma_{l-d}(H)\;\;(\mbox{By Lemma \ref{lema-loc-dom-2}})\\
&=|A|+n\gamma_{l-d}(H)\\
&=\gamma(G)+n\gamma_{l-d}(H).
\end{align*}
Therefore, the proof of (ii) is complete.
\end{proof}

\section{Other kinds of domination related parameters}

A set of vertices $D$ of a graph $G$ is a ({\em
connected}\footnote{$D$ is connected in $G$ if for any two different
vertices $u,v\in D$ there exists a path $P$ of length $d(u,v)$
between $u$ and $v$ such that every vertex of $P$ belongs to $D$.},
{\em convex}\footnote{$D$ is convex in $G$ if for any two different
vertices $u,v\in D$ all the vertices of all paths of length $d(u,v)$
between $u$ and $v$ belong to $D$.} or {\em independent}) {\em
dominating set} in $G$ if $D$ is a dominating set and a (connected,
convex or independent) set in $G$. The minimum cardinality of any
(connected, convex or independent) dominating set in $G$ is called
the ({\em connected, convex} or {\em independent}) {\em domination
number} of $G$ and it is denoted by ($\gamma_c(G), \gamma_{con}(G)$
or $i(G)$). A set $D$ is a {\em distance}-$k$ {\em dominating set}
in $G$ if for every vertex $v\in \overline{D}$ it follows that
$d(u,v)\le k$ for some $v\in D$, where $d(u,v)$ represents the
distance between the vertices $u$ and $v$. The minimum cardinality
of any distance-$k$ dominating set of a graph is called the {\em
distance-$k$ dominating} number of $G$ and it is denoted by
$\gamma_{\le k}(G)$.

There are some domination parameters whose value is very easy to
observe for  the case of corona graph. For instance, it is clear
that $\gamma(G\odot H)=\gamma_c(G\odot H)=\gamma_{con}(G\odot
H)=n_1$ and $\beta_0(G\odot H)=n_1\beta_0(H)$. At next we obtain the exact value of some domination related parameters of corona graphs.

\begin{theorem}
For any connected graph $G$ of order $n$ and for any graph $H$, if $k\ge 2$, then
$$\gamma_k(G\odot H)=n\min\{\gamma_k(H),\gamma_{k-1}(H)+1\}.$$
\end{theorem}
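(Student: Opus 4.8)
The plan is to prove the two bounds separately, working with the partition $V(G\odot H)=\bigcup_{i=1}^{n}(V_i\cup\{v_i\})$ and with the structural fact that the only edge joining $V_i$ to the rest of $G\odot H$ is incident with $v_i$, so $N_{G\odot H}(x)=N_{H_i}(x)\cup\{v_i\}$ for every $x\in V_i$.

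For the lower bound, let $D$ be any $k$-dominating set of $G\odot H$ and fix an index $i$. If $v_i\notin D$, then each $x\in V_i\setminus D$ has all of its $D$-neighbours inside $V_i$, so $\delta_{D\cap V_i}(x)=\delta_D(x)\ge k$; hence $D\cap V_i$ is a $k$-dominating set of the copy $H_i\cong H$, and $|D\cap(V_i\cup\{v_i\})|=|D\cap V_i|\ge\gamma_k(H)$. If $v_i\in D$, then restricting attention to $H_i$ removes exactly the single $D$-neighbour $v_i$ from each $x\in V_i\setminus D$, so $\delta_{D\cap V_i}(x)\ge k-1$; hence $D\cap V_i$ is a $(k-1)$-dominating set of $H_i$, and $|D\cap(V_i\cup\{v_i\})|=|D\cap V_i|+1\ge\gamma_{k-1}(H)+1$. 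In both cases $|D\cap(V_i\cup\{v_i\})|\ge\min\{\gamma_k(H),\gamma_{k-1}(H)+1\}$, and summing over $i$ gives $\gamma_k(G\odot H)\ge n\min\{\gamma_k(H),\gamma_{k-1}(H)+1\}$.

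For the upper bound I would exhibit two feasible $k$-dominating sets and retain the smaller. First, choosing a minimum $(k-1)$-dominating set $D_i'\subseteq V_i$ of each $H_i$, the set $D_1=\bigcup_{i=1}^{n}(D_i'\cup\{v_i\})$ is $k$-dominating, since a vertex outside $D_1$ lies in some $V_i\setminus D_i'$ and has at least $k-1$ neighbours in $D_i'$ plus the neighbour $v_i\in D_1$; this gives $\gamma_k(G\odot H)\le n(\gamma_{k-1}(H)+1)$. Second, choosing a minimum $k$-dominating set $D_i\subseteq V_i$ of each $H_i$, the set $D_2=\bigcup_{i=1}^{n}D_i$ $k$-dominates every vertex of $V_i\setminus D_i$ automatically, and it $k$-dominates each apex vertex $v_i$ because $\delta_{D_2}(v_i)=|D_i|=\gamma_k(H)\ge k$; this gives $\gamma_k(G\odot H)\le n\gamma_k(H)$. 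Taking the minimum of the two upper bounds and comparing with the lower bound yields the stated equality.

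The step requiring the most care is the verification that the apex vertices $v_i$ are $k$-dominated in the construction $D_2$, which is exactly where one uses $\gamma_k(H)\ge k$; this holds when $n_2\ge k$, whereas in the remaining situation $n_2<k$ every vertex of each copy $H_i$ already has degree below $k$ and is forced into the dominating set, so that case is best treated on its own. Everything else reduces to elementary neighbour counts based on the single-link structure of the corona and on the chain $\gamma_{k-1}(H)\le\gamma_k(H)\le n_2$.
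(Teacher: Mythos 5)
Your argument follows the paper's proof almost line for line: the lower bound via the partition of $V(G\odot H)$ into the sets $V_i\cup\{v_i\}$ (your per-copy bound $|D\cap(V_i\cup\{v_i\})|\ge\min\{\gamma_k(H),\gamma_{k-1}(H)+1\}$ is in fact a cleaner packaging of the paper's counting argument and its case analysis on $\gamma_k(H)-\gamma_{k-1}(H)$), and the upper bound via the same two constructions $\bigcup_i D_i$ and $\bigcup_i(D_i'\cup\{v_i\})$. The genuine gap is exactly the point you flag and then defer: the set $D_2=\bigcup_i D_i$ only $k$-dominates the apex vertices when $\gamma_k(H)\ge k$, equivalently when $n_2\ge k$ (here $n_2$ is the order of $H$), and for $n_2<k$ you say the case ``is best treated on its own'' without treating it, so the claimed equality is not established there.

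Moreover, that missing case cannot be repaired, because the stated formula is false when $n_2<k$. In that regime every vertex of every copy has degree $\delta_H(x)+1\le n_2<k$ in $G\odot H$ and so is forced into any $k$-dominating set $D$; but then an apex $v_i\notin D$ has only its $n_2<k$ copy-neighbours in $\bigcup_i V_i$, so it must either join $D$ or have at least $k-n_2$ of its $G$-neighbours in $D$. Hence $\gamma_k(G\odot H)=nn_2+\gamma_{k-n_2}(G)$ (with the convention $\gamma_1=\gamma$), which strictly exceeds $n\min\{\gamma_k(H),\gamma_{k-1}(H)+1\}=nn_2$, since $\gamma_k(H)=\gamma_{k-1}(H)=n_2$ when $n_2<k$. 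The smallest instance: $G=K_2$, $H=K_1$, $k=2$ gives $G\odot H\cong P_4$, where $\gamma_2(P_4)=3$ while the formula yields $2$. So the theorem really needs the extra hypothesis $\gamma_k(H)\ge k$ (equivalently $n_2\ge k$); note that the paper's own proof silently commits the very step you were careful about, asserting $\delta_A(v_i)=|A_i|\ge k$ without justification. Under that hypothesis your proof is complete, correct, and essentially identical to the paper's.
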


\begin{proof}
Let $S$ be a $k$-dominating set of minimum cardinality in $G\odot
H$. If $v_i\in V\cap S$, then for every  $v\in \overline{V_i\cap S}$
in $H_i$ we have that $k\le\delta_S(v)=\delta_{V_i\cap S}(v)+1$.
Thus, $V_i\cap S$ is a $(k-1)$-dominating set in $H_i$. Also, if
$v_j\notin V\cap S$, then for every $v\in \overline{V_j\cap S}$ in
$H_j$ we have that $k\le\delta_S(v)=\delta_{V_j\cap S}(v)$ and we
obtain that $V_j\cap S$ is a $k$-dominating set in $H_j$. Let
$A=S\cap V$. Hence, we have that

\begin{equation}\label{equation-1}
|S|\ge |A|+|A|\gamma_{k-1}(H)+(n-|A|)\gamma_{k}(H).
\end{equation}

Now, if $\gamma_{k-1}(H)=\gamma_k(H)$ or
$\gamma_{k-1}(H)=\gamma_k(H)-1$, then by (\ref{equation-1}) we
obtain that $|S|\ge n\gamma_{k}(H)$. On the contrary, if
$\gamma_{k-1}(H)\le \gamma_k(H)-2$ then by (\ref{equation-1}) we
have

\begin{align*}
|S|&\ge |A|+|A|\gamma_{k-1}(H)+(n-|A|)\gamma_{k}(H)\\
&\ge |A|+|A|\gamma_{k-1}(H)+(n-|A|)(\gamma_{k-1}(H)+2)\\
&=2n-|A|+n\gamma_{k-1}(H)\\
&\ge 2n-n+n\gamma_{k-1}(H)\\
&=n(\gamma_{k-1}(H)+1).
\end{align*}
Therefore, we obtain that $\gamma_k(G\odot H)\ge
n\min\{\gamma_k(H),\gamma_{k-1}(H)+1\}$.

On the other hand,  let $A_i$
be a $k$-dominating  set in $H_i$, $i\in \{1,...,n\}$ and let
$A=\bigcup_{i=1}^{n}A_i$. So, for every vertex $v_i\in V$ we have
that $\delta_{A}(v_i)=\delta_{A_i}(v_i)=|A_i|\ge k$. Also, for
every vertex $u\in \overline{A_i}$ in $H_i$ we have that
$\delta_A(u)=\delta_{A_i}(v)\ge k$. Thus, $A$ is a $k$-dominating
set in $G\odot H$ and, as a consequence, $\gamma_k(G\odot H)\le
n\gamma_k(H)$.

Also, let $B_i$ be a
$(k-1)$-dominating set in $H_i$,  $i\in\{1,...,n\}$ and let
$B=(\bigcup_{i=1}^{n}B_i)\cup V$. So, for every vertex $u\in
\overline{B_i}$ in $H_i$ we have that
$\delta_B(u)=\delta_{B_i}(u)+1\ge k$. Thus, $B$ is a $k$-dominating
set in $G\odot H$ and, as a consequence, $\gamma_k(G\odot H)\le
n+n\gamma_{k-1}(H)=n(\gamma_{k-1}(H)+1)$.

Therefore, $\gamma_k(G\odot H)\le n\min\{\gamma_k(H),(\gamma_{k-1}(H)+1)\}$ and the result follows.
\end{proof}

\begin{theorem}
For any connected graph $G$ and any graph $H$, if $k\ge 2$, then
$$\gamma_{\le k}(G\odot H)=\gamma_{\le k-1}(G).$$
\end{theorem}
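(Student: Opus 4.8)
The plan is to establish the equality $\gamma_{\le k}(G\odot H)=\gamma_{\le k-1}(G)$ by two inequalities, exploiting the fact that in $G\odot H$ every vertex of a copy $H_i$ is at distance $1$ from $v_i$, and hence within distance $t+1$ of everything that $v_i$ reaches within distance $t$. The key geometric observation I would record first is: for any vertices $v_i, v_j \in V$ with $i\neq j$, one has $d_{G\odot H}(v_i,v_j)=d_G(v_i,v_j)$, while for $u\in V_i$ and any vertex $w$ outside $V_i\cup\{v_i\}$ one has $d_{G\odot H}(u,w)=1+d_{G\odot H}(v_i,w)$; in particular a single vertex $v_i$ of $G$ distance-$t$-dominates (inside $G\odot H$) exactly $\{v_i\}\cup V_i$ together with all vertices lying in $M_{t-1}[v_i]^{G\odot H}$, and more to the point, if $v_i$ distance-$(k-1)$-dominates $v_j$ in $G$, then $v_i$ distance-$k$-dominates every vertex of $V_j\cup\{v_j\}$ in $G\odot H$. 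This last sentence is the whole engine of the argument.

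For the upper bound $\gamma_{\le k}(G\odot H)\le \gamma_{\le k-1}(G)$: take a minimum distance-$(k-1)$ dominating set $D$ of $G$ and use the \emph{same} set $D\subseteq V\subseteq V(G\odot H)$ as a candidate in $G\odot H$. Any vertex $v_j\in V\setminus D$ has some $v_i\in D$ with $d_G(v_i,v_j)\le k-1$, so $d_{G\odot H}(v_i,v_j)\le k-1\le k$; and any vertex $u\in V_j$ (for arbitrary $j$) has some $v_i\in D$ with $d_G(v_i,v_j)\le k-1$, whence $d_{G\odot H}(u,v_i)\le 1+(k-1)=k$. (When $j$ is such that $v_j\in D$, the bound is even easier since $d_{G\odot H}(u,v_j)=1\le k$ using $k\ge 2$.) Hence $D$ is a distance-$k$ dominating set of $G\odot H$.

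For the lower bound $\gamma_{\le k}(G\odot H)\ge \gamma_{\le k-1}(G)$: let $S$ be a minimum distance-$k$ dominating set of $G\odot H$, and build from it a distance-$(k-1)$ dominating set $D'$ of $G$ with $|D'|\le |S|$. The natural construction is to ``project'' $S$ onto $G$: for each $i$, if $v_i\in S$ put $v_i$ into $D'$; if $S\cap V_i\neq\emptyset$ but $v_i\notin S$, replace any one chosen vertex of $S\cap V_i$ by $v_i$ in $D'$; finally if $S\cap(V_i\cup\{v_i\})=\emptyset$, then $v_i$ must be distance-$k$ dominated in $G\odot H$ by some vertex, which—since nothing in $V_i$ is selected—must be a vertex $w$ with $d_{G\odot H}(w,v_i)\le k$ and $w\notin V_i$, so $w\in V\cup\bigcup_{\ell\ne i}V_\ell$; tracing $w$ back to its ``root'' $v_\ell$ (with $v_\ell=w$ if $w\in V$) gives $d_G(v_\ell,v_i)\le d_{G\odot H}(w,v_i)\le k$, but one needs $\le k-1$, so a little care is needed—this replacement/charging bookkeeping is the main obstacle. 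I would resolve it by observing that $|D'|$ as built satisfies $|D'|\le \sum_i |S\cap(V_i\cup\{v_i\})| = |S|$ (each selected vertex is charged to at most one index $i$, and indices with empty intersection contribute nothing yet), and then showing separately that $D'$ as it stands (the roots of all nonempty fibers) already distance-$(k-1)$ dominates $G$: indeed if $v_i$ has $S\cap(V_i\cup\{v_i\})=\emptyset$, the dominating vertex $w\in V_\ell$ (root $v_\ell$, which lies in $D'$ because $S\cap(V_\ell\cup\{v_\ell\})\ne\emptyset$) satisfies, because $w$ is \emph{in the copy} $V_\ell$ and $v_i$ is outside it, $d_{G\odot H}(w,v_i)=1+d_{G\odot H}(v_\ell,v_i)=1+d_G(v_\ell,v_i)$, forcing $d_G(v_\ell,v_i)\le k-1$; if instead $w=v_\ell\in V$, then $d_G(v_\ell,v_i)=d_{G\odot H}(v_\ell,v_i)\le k$, and here I would note that the case $w\in V$ with $d=k$ cannot actually be the unique witness—one can always instead route through a copy vertex or use that $v_\ell$'s own fiber being nonempty lets us pick a copy vertex—tightening it to $k-1$. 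Assembling these gives $\gamma_{\le k-1}(G)\le |D'|\le |S|=\gamma_{\le k}(G\odot H)$, and combined with the upper bound the theorem follows. The delicate point, and the one I would write most carefully, is exactly this last tightening from distance $k$ to distance $k-1$ in $G$ when a vertex of $G$ is dominated ``through the $G$-part'' rather than ``through a pendant copy.''
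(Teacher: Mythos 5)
Your overall strategy (prove both inequalities; for the upper bound use a minimum distance-$(k-1)$ dominating set of $G$ verbatim in $G\odot H$; for the lower bound project a minimum distance-$k$ dominating set $S$ of $G\odot H$ onto the set $A$ of vertices $v_i$ of $G$ whose fiber $V_i\cup\{v_i\}$ meets $S$) is exactly the paper's, and your upper bound and the bookkeeping $|A|\le|S|$ are fine. The problem is precisely at the point you yourself flag as delicate, and the fix you sketch does not work. If $v_i$ has $S\cap(V_i\cup\{v_i\})=\emptyset$ and its only witnesses in $S$ are $G$-vertices at distance exactly $k$, neither of your two suggestions closes the gap: ``routing through a copy vertex'' of $V_\ell$ only lengthens the walk, since a vertex of $V_\ell$ lies at distance $d_G(v_i,v_\ell)+1=k+1$ from $v_i$; and ``$v_\ell$'s fiber being nonempty'' only records that $v_\ell\in S$, not that $S\cap V_\ell\neq\emptyset$ (indeed $S$ could consist entirely of $G$-vertices, in which case there are no copy-vertex witnesses anywhere). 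So as written, the tightening from $k$ to $k-1$ is asserted but not proved, and this is the heart of the lower bound.

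The missing idea is to look at the pendant copy attached to $v_i$ itself, not at the witness's side. Since $S\cap(V_i\cup\{v_i\})=\emptyset$, every vertex $u\in V_i$ reaches $S$ only through $v_i$, so $d_{G\odot H}(u,S)=1+d_{G\odot H}(v_i,S)$. If $d_{G\odot H}(v_i,S)=k$ (equivalently, if $d_G(v_i,A)=k$, using $d_G(v_i,A)\le d_{G\odot H}(v_i,S)\le k$), then $d_{G\odot H}(u,S)=k+1$, contradicting that $S$ is a distance-$k$ dominating set of $G\odot H$ (here $H$ nonempty, i.e.\ $V_i\neq\emptyset$, is what gets used). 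Hence $d_{G\odot H}(v_i,S)\le k-1$, and whichever the nearest witness is ($v_\ell\in V$, or a copy vertex whose root $v_\ell$ then satisfies $d_G(v_i,v_\ell)\le k-2$), one gets $d_G(v_i,A)\le k-1$. This is exactly how the paper argues, and with this substitution your proof becomes complete.
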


\begin{proof}
Let $S$ be a distance-$(k-1)$ dominating set in $G$ with order $n$. Hence, for
every vertex $v_i\in V$, with $i\in \{1,...,n\}$, we  have that $d_{G\odot
H}(v_i,S)=d_G(v_i,S)\le k-1$. Also, for every vertex $u\in V_i$, $i\in \{1,...,n\}$, in
$G\odot H$ we have that $d_{G\odot H}(u,S)=d_{G}(v_i,S)+1\le k$. So, $S$ is a
distance-$k$ dominating set in $G\odot H$ and, as a consequence,
$\gamma_{\le k}(G\odot H)\le \gamma_{\le k-1}(G)$.

On the other hand, let $B$ be a distance-$k$ dominating set in
$G\odot H$ of minimum cardinality. Now, let  $A=\{v_{i_1},v_{i_2},...v_{i_r}\}$ be the set
of vertices of $G$ such that $(V_{i_j}\cup \{v_{i_j}\})\cap B\ne
\emptyset$, for every $j\in \{1,...,r\}$. Since, $\gamma_{\le
k}(G\odot H)\le \gamma_{\le k-1}(G)<n$, we have that $r\le n-1$
and for every vertex $v_l\in\overline{A}$ in $G$ we have,
$$d_G(v_l,A)\le d_{G\odot H}(v_l,B)\le k.$$
Now, if $d_G(v_l,A)=k$, then $d_G(v_l,A)=d_{G\odot H}(v_l,B)=k$ and for any vertex $u\in V_l$ we have that
$$d_{G\odot H}(u,B)=d_{G\odot H}(u,v_l)+d_{G\odot H}(v_l,B)=d_{G}(v_l,A)+1=k+1,$$
which is a contradiction because $B$ is a distance-$k$ dominating
set in $G\odot H$. Thus,  for every vertex $v_l\in \overline{A}$ in
$G$, we have that $d_G(v_l,A)\le k-1$ and, as a consequence, $A$ is
a distance-$(k-1)$ dominating set in $G$.

Therefore, $\gamma_{\le k}(G\odot H)=|B|\ge |A|\ge \gamma_{\le k-1}(G)$ and the result follows.
\end{proof}

\begin{theorem}
For any connected graph $G$ of order $n$ and for any graph $H$,
$$i(G\odot H)=ni(H)-\beta_0(G)(i(H)-1).$$
\end{theorem}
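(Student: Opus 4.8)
The plan is to establish the formula via matching upper and lower bounds, working copy by copy. The key structural observation is that an independent dominating set $S$ of $G\odot H$ interacts with each gadget $V_i\cup\{v_i\}$ in one of two ways, and exactly one of the two ways is "cheap". First I would analyze the upper bound by exhibiting an explicit independent dominating set. Let $W$ be a maximum independent set of $G$, so $|W|=\beta_0(G)$. For each $v_i\in W$, put $v_i$ into $S$ and add to $S$ a minimum independent dominating set of $H_i$ that avoids the neighborhood condition — actually, since $v_i$ already dominates all of $V_i$, we need $S\cap V_i$ to be independent in $H_i$ and to dominate the vertices of $V_i$ not already... no: $v_i$ dominates all of $V_i$, so $S\cap V_i$ may be empty as far as domination goes, but $S$ must be a \emph{maximal} independent set, i.e. independent and dominating, so every vertex of $V_i$ outside $S$ must have a neighbor in $S$; $v_i$ is such a neighbor for all of them. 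Hence for $v_i\in W$ we may take $S\cap V_i=\emptyset$, contributing just $1$. For $v_i\notin W$, the vertex $v_i$ is dominated (it has a neighbor in $W$ within $G$, since $W$ is a maximal independent set of $G$), so we put into $S$ a minimum independent dominating set of $H_i$, which dominates all of $V_i$; this contributes $i(H)$, and $v_i$ is then dominated by $W$ inside $G$ — but we must check $v_i\notin S$ is consistent: yes. One must verify independence across the whole graph: edges between $v_i$ and $V_i$ are the only inter-gadget edges, and $v_i\in S\Rightarrow V_i\cap S=\emptyset$, while $v_i\notin S$ causes no conflict. This yields $i(G\odot H)\le |W|\cdot 1+(n-|W|)\,i(H)=\beta_0(G)+(n-\beta_0(G))i(H)=ni(H)-\beta_0(G)(i(H)-1)$.

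For the lower bound, let $S$ be a minimum independent dominating set of $G\odot H$ and let $A=\{v_i\in V : v_i\in S\}$. Since $S$ is independent in $G\odot H$ and $V$ induces $G$, $A$ is independent in $G$, so $|A|\le\beta_0(G)$. For $v_i\in A$: $S\cap V_i=\emptyset$ by independence, so the contribution of $V_i\cup\{v_i\}$ to $S$ is exactly $1$. For $v_i\notin A$: the vertices of $V_i$ are dominated only from within $V_i$ (their only outside neighbor, $v_i$, is not in $S$), and moreover $v_i$ itself, if not dominated by $A$... here is the subtle point. I claim $S\cap V_i$ is an independent dominating set of $H_i$, hence $|S\cap V_i|\ge i(H)$: it is independent (subset of an independent set), and it dominates $V_i$ because each vertex of $V_i\setminus S$ needs a neighbor in $S$ and its only candidate outside $V_i$ is $v_i\notin S$. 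So $|S|\ge |A|\cdot 1+(n-|A|)\,i(H)$. The function $|A|+( n-|A|)i(H)=ni(H)-|A|(i(H)-1)$ is decreasing in $|A|$ when $i(H)\ge 1$ (true for nonempty $H$), so it is minimized at $|A|=\beta_0(G)$, giving $|S|\ge ni(H)-\beta_0(G)(i(H)-1)$; but we need the reverse, so the bound $|A|\le\beta_0(G)$ alone is not enough — I must also argue that a minimum $S$ actually \emph{achieves} $|A|=\beta_0(G)$, or equivalently, rule out $|A|<\beta_0(G)$ for an \emph{optimal} $S$.

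The main obstacle, then, is this last point: showing that making $A$ as large as possible is forced at optimality. The argument is an exchange/local-modification one. Suppose $|A|<\beta_0(G)$ in a minimum $S$. Then $A$ is not a maximal independent set of $G$, so there is $v_j\notin A$ with $A\cup\{v_j\}$ independent in $G$; equivalently $v_j$ has no neighbor in $A$ and $v_j\notin A$. Since $v_j\notin S$ but $S$ dominates $v_j$, and $v_j$'s neighbors are its $G$-neighbors (none in $A\subseteq S$) together with $V_j$, $v_j$ must have a neighbor in $S\cap V_j$. Now modify $S$: remove $S\cap V_j$ (which has size $\ge i(H)\ge 1$) and add $v_j$, obtaining $S'=(S\setminus V_j)\cup\{v_j\}$. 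Then $|S'|\le|S|-i(H)+1\le|S|$, $S'$ is independent (removing $V_j$-vertices and adding $v_j$ whose only new adjacencies are into $V_j$, now empty of $S'$-vertices, and into $G$ where it has no $A$-neighbor), and $S'$ dominates everything ($V_j$ is dominated by $v_j$; $v_j$ is in $S'$; all else unchanged). So $S'$ is also optimal, with strictly larger $A$-set; iterating, we reach an optimal set with $|A|=\beta_0(G)$ (if $i(H)=1$ the cardinality may stay equal but $|A|$ strictly increases, so the process terminates), which gives $|S|=|S'|\ge ni(H)-\beta_0(G)(i(H)-1)$. Combining with the upper bound completes the proof. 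A minor care point throughout: if $H$ has isolated vertices or $i(H)$ behaves degenerately the inequalities still hold since $i(H)\ge 1$ always for $H$ with at least one vertex; and connectivity of $G$ is used only to ensure the corona is connected and domination statements are as expected.
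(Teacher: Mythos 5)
Your upper-bound construction (a maximum independent set $W$ of $G$ together with a minimum independent dominating set of $H_i$ for each $v_i\notin W$) and your copy-by-copy count for the lower bound are exactly the paper's argument, and they already prove the theorem. The point where you talk yourself out of it is a misreading of the inequality's direction: from $|S|\ge |A|+(n-|A|)i(H)=ni(H)-|A|(i(H)-1)$, together with $|A|\le\beta_0(G)$ and $i(H)-1\ge 0$, you get $|S|\ge ni(H)-\beta_0(G)(i(H)-1)$, and this is precisely the lower bound required; combined with your explicit set of that size it gives equality. You do not need ``the reverse,'' and you do not need to show that an optimal $S$ attains $|A|=\beta_0(G)$ -- the paper never does.

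The exchange argument you append to fix this non-problem is not only unnecessary but contains a genuine error: from $|A|<\beta_0(G)$ you infer that $A$ is not a \emph{maximal} independent set of $G$, but maximal and maximum are different notions. $A$ could be maximal in $G$ while $|A|<\beta_0(G)$ (for the star $K_{1,n-1}$ the center alone is maximal of size $1$, whereas $\beta_0=n-1$), and then the vertex $v_j$ with no neighbor in $A$ does not exist and your iteration stalls short of $|A|=\beta_0(G)$. The swap itself, $S'=(S\setminus V_j)\cup\{v_j\}$, is sound whenever such a $v_j$ exists, but since the direct bound already closes the proof, the right fix is simply to delete this final part rather than repair it.
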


\begin{proof}
Let $S$ be an independent dominating set of minimum cardinality in $G\odot H$. If $v_i\in
V\cap S$, then for every $v\in V_i$ we have that $v\notin
S$. Also, if $v_i\notin V\cap S$, then there exists
$S_i\subset V_i$, such that $S_i\subset S$ and $|S_i|\ge i(H)$.
Thus, we obtain that there exist the sets $A\subset V$ and
$S_i\subset V_i$, $i\in \{1,...,t\}$, such that $n=t+|A|$, $|A|\le
\beta_0(G)$ and $S=(\bigcup_{i=1}^tS_i)\cup A$. Thus, $t\ge
n-\beta_0(G)$ and we have
\begin{align*}
|S|&=|A|+\sum_{i=1}^t|S_i|\\
&=n-t+\sum_{i=1}^t|S_i|\\
&\ge n-t+ti(H)\\
&=n+t(i(H)-1)\\
&\ge n+(n-\beta_0(G))(i(H)-1)\\
&=ni(H)-\beta_0(G)(i(H)-1).
\end{align*}
Therefore, $i(G\odot H)=|S|\ge ni(H)-\beta_0(G)(i(H)-1).$

On the other hand, let $A$ be an independent set of maximum
cardinality in $G$. Now, for every $v_i\in \overline{A}$, let
$S_i\subset V_i$ be an independent dominating set in $H_i$. Let
$S=A\cup (\bigcup_{v_i\in \overline{A}}S_i)$. It is easy to see that
$S$ is independent and dominating. So, $i(G\odot H)\le
ni(H)-\beta_0(G)(i(H)-1)$ and the result follows.
\end{proof}

A {\em domatic partition} of a graph $G$ is a vertex partition of $G$ in
which every set is a  dominating set \cite{domatic-1,domatic-2,domatic-3}. The maximum number of sets in
any domatic partition of $G$ is called the {\em domatic number} of $G$ and
it is denoted by  $d(G)$. If $G$ has a domatic partition, then $G$
is a {\em domatic graph} (or $G$ is domatic). Similarly, if there
exists a vertex partition of $G$ into independent dominating sets,
then such a partition is called {\em idomatic} \cite{domatic-3}. The maximum number
of sets in any partition of a graph $G$ into independent dominating
sets is called the {\em idomatic number} of $G$ and it is denoted by
$d_i(G)$.  If $G$ has an idomatic partition, then $G$ is an {\em
idomatic graph} (or $G$ is idomatic). Next we study the domatic and
idomatic numbers of corona graphs.

\begin{remark}
For any connected graph $G$ and for any graph $H$, $$d(G\odot
H)=d(H)+1.$$
\end{remark}

\begin{proof}
Let $\Pi_i=\{S_{i1},S_{i2},...,S_{id(H)}\}$ be a domatic partition
for $H_i$. Now, let $S_i=\cup_{j=1}^{n_1}S_{ji}$, where $n_1$ is the
order of $G$ and $i\in \{1,...,d(G)\}$. Since every $S_{ij}$, $j\in
\{1,...,d(H)\}$ is a dominating set in $H_i$, $i\in \{1,...,n_1\}$
we obtain that $S_i=\cup_{j=1}^{n_1}S_{ji}$, $i\in \{1,...,n_1\}$ is
a dominating set in $G\odot H$. Also, as $V$ is a dominating set in
$G\odot H$ we have that $d(G\odot H)\ge d(H)+1$.

On the other hand, let $\Pi=\{A_1,A_2,...,A_t\}$ be a domatic
partition of maximum cardinality for $G\odot H$. If $v_l\in V$
belongs to $A_j\in \Pi$, then $A_j\cap V_l=\emptyset$ and for every
$A_i\in \Pi$, $i\ne j$, $A_i\cap V_l\ne \emptyset$ and also $A_i\cap
V_l$ is an independent dominating set in $H_l$ for every $i\in
\{1,...,r\}-\{j\}$. Thus,  $\Pi'=\{A_1\cap V_l,A_2\cap
V_l,...,A_{j-1}\cap V_l,A_{j+1}\cap V_l,...,A_r\cap V_l\}$ is a
domatic partition for $H_l$. Therefore $d(G\odot H)=t\le d(H)+1$ and
the result follows.
\end{proof}

\begin{theorem}
Let $G$ be a connected graph and let $H$ be an idomatic graph. Then
$d_i(G\odot H)=d_i(H)+1$ if and only if $G$ has a partition into
$d_i(H)+1$ independent sets.
\end{theorem}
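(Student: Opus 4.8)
The plan is to establish both directions of the biconditional by translating idomatic partitions of $G\odot H$ back and forth into idomatic partitions of $H$ together with partitions of $G$ into independent sets. Throughout, I would use the basic structural fact about corona graphs already exploited in the previous remark: in $G\odot H$, each copy $H_i$ together with its attachment vertex $v_i$ forms a block $K_1\odot H_i$, and $V$ (the vertex set of $G$) is always a dominating set of $G\odot H$.

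\medskip
\noindent\textbf{Sufficiency.} Suppose $G$ admits a partition into $d_i(H)+1$ independent sets, say $\{W_1,\dots,W_{d_i(H)+1}\}$, and let $\{T_1,\dots,T_{d_i(H)}\}$ be an idomatic partition of $H$. For each copy $H_\ell$ fix the corresponding idomatic partition $\{T_{\ell,1},\dots,T_{\ell,d_i(H)}\}$. I would build $d_i(H)+1$ sets $A_1,\dots,A_{d_i(H)+1}$ for $G\odot H$ as follows: the last set is $A_{d_i(H)+1}=V=\{v_1,\dots,v_{n_1}\}$, which is independent in $G$? No — here I must be careful: $V$ need not be independent in $G\odot H$. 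Instead I would use the partition $\{W_j\}$ to control independence. Put into $A_j$ (for $j\le d_i(H)$) the set $\bigcup_{\ell} T_{\ell,j}$, and additionally, for each $\ell$, assign the attachment vertex $v_\ell$ to the class $A_j$ with $j$ chosen so that $v_\ell\in W_j$; if $v_\ell\in W_{d_i(H)+1}$ put $v_\ell$ into $A_{d_i(H)+1}$. One then checks: (1) each $A_j$ is independent — inside each $H_\ell$ the part $T_{\ell,j}$ is independent, distinct copies are non-adjacent, and an attachment vertex $v_\ell$ placed in $A_j$ is adjacent only to vertices of $H_\ell$, none of which lie in $A_j$ since $v_\ell\notin T_{\ell,j}$ forces... — here is where the independence of the $W_j$'s is needed: two attachment vertices $v_\ell,v_{\ell'}$ in the same $A_j$ are non-adjacent in $G\odot H$ because $v_\ell,v_{\ell'}\in W_j$ which is independent in $G$. (2) Each $A_j$ dominates: for $j\le d_i(H)$, every vertex of every $H_\ell$ is dominated by $T_{\ell,j}$, and every $v_\ell$ is dominated by any vertex of $T_{\ell,j}\subseteq A_j$; for $A_{d_i(H)+1}$, every $v_\ell\in A_{d_i(H)+1}$ dominates all of $H_\ell$, and each vertex of a copy $H_\ell$ whose attachment $v_\ell$ is \emph{not} in $A_{d_i(H)+1}$ is still adjacent to some vertex placed there? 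That fails. So I would instead put \emph{all} attachment vertices $v_\ell$ into $A_{d_i(H)+1}$ and use a slightly different scheme, distributing one idomatic part of each $H_\ell$ among the classes forced by $W$; I will need to reconcile these two competing demands — this is the delicate bookkeeping step.

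\medskip
\noindent\textbf{Necessity.} Conversely, suppose $d_i(G\odot H)=d_i(H)+1$ and let $\Pi=\{A_1,\dots,A_{d_i(H)+1}\}$ be an idomatic partition of $G\odot H$. For each copy $H_\ell$: exactly one class, say $A_{j(\ell)}$, contains the attachment vertex $v_\ell$, and for every other index $i$, the set $A_i\cap V_\ell$ must be a dominating set of $H_\ell$ (since $v_\ell\notin A_i$ and vertices of $H_\ell$ have no other external neighbor), indeed an \emph{independent} dominating set; moreover $A_{j(\ell)}\cap V_\ell$ together with possibly $v_\ell$ still restricts to an independent dominating set of $H_\ell$ or is empty. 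Counting forces these $d_i(H)$ traces $\{A_i\cap V_\ell : i\ne j(\ell)\}$ (dropping an empty one if $A_{j(\ell)}\cap V_\ell=\emptyset$) to form an idomatic partition of $H_\ell$ of size exactly $d_i(H)$, and the classes $\{A_i\cap V : i=1,\dots,d_i(H)+1\}$ to partition $V$. Finally, each $A_i\cap V$ is an independent set of $G$: if $v_p,v_q\in A_i\cap V$ with $v_p\sim v_q$ in $G$, then $v_p\sim v_q$ in $G\odot H$, contradicting independence of $A_i$. Hence $G$ has a partition into $d_i(H)+1$ independent sets, completing this direction.

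\medskip
\noindent\textbf{Main obstacle.} The crux is the sufficiency direction: arranging the $d_i(H)+1$ classes so that \emph{simultaneously} (a) the attachment vertices are colored according to a partition of $G$ into $d_i(H)+1$ independent sets (forcing their class assignment), and (b) every class still dominates $G\odot H$ — in particular the class containing $v_\ell$ must still dominate the interior of $H_\ell$, which requires that an idomatic part of $H_\ell$ be available in that class. The resolution is to rotate, within each copy $H_\ell$, which idomatic part $T_{\ell,\cdot}$ goes into which global class $A_\cdot$, so that the part landing in the class $j(\ell)=\{i : v_i\in W_i\}$... — more precisely, I would index the idomatic parts of $H_\ell$ by the $d_i(H)$ indices $\{1,\dots,d_i(H)+1\}\setminus\{j(\ell)\}$ and let the attachment vertex $v_\ell$ sit alone-from-$H_\ell$'s-perspective in class $j(\ell)$, whose domination of $H_\ell$'s interior is then guaranteed by adjacency of $v_\ell$ to all of $H_\ell$. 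Verifying that this rotation keeps every class independent (the only new adjacencies are among attachment vertices, handled by independence of the $W_j$'s) and dominating (each class either contains a full idomatic part of $H_\ell$, or contains $v_\ell$ which dominates $V_\ell$) finishes the proof.
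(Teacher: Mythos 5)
Your final construction for the sufficiency direction (index the idomatic parts of each copy $H_\ell$ by the classes other than the one, $j(\ell)$, containing the attachment vertex $v_\ell$, and let $v_\ell$ itself cover $V_\ell$ inside class $j(\ell)$) is exactly the paper's construction $S_i=A_i\cup\bigl(\bigcup_{v_\ell\notin A_i}B_{\ell i}\bigr)$, and your necessity argument (restrict each class to $V$ and note that independence is inherited from $G\odot H$) also matches the paper's. The first half of your sufficiency paragraph, which tries to place all attachment vertices into a single class, is a dead end that you rightly abandon; only the ``rotation'' version should survive in a final writeup.

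However, there is a genuine gap: your sufficiency argument only establishes $d_i(G\odot H)\ge d_i(H)+1$, whereas the statement requires the equality $d_i(G\odot H)=d_i(H)+1$. Nothing in your proposal rules out an idomatic partition of $G\odot H$ with \emph{more} than $d_i(H)+1$ classes. The paper closes this with a restriction-to-a-copy argument applied to an arbitrary idomatic partition of maximum cardinality $r$: if $v_i$ is any vertex of $G$, the class containing $v_i$ is disjoint from $V_i$ (by independence, since $v_i$ is adjacent to all of $V_i$), every other class must meet $V_i$ (a vertex of $V_i$ has no neighbours outside $V_i\cup\{v_i\}$, so it could not be dominated otherwise), and the resulting $r-1$ nonempty traces on $V_i$ form an idomatic partition of $H_i$; hence $r-1\le d_i(H)$. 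You already use precisely this trace reasoning in your necessity direction, but only on a partition assumed to have exactly $d_i(H)+1$ classes, so it never yields the needed upper bound. You must state and prove $d_i(G\odot H)\le d_i(H)+1$ separately; it is needed in the ``if'' direction, before the hypothesis on $G$ even enters.
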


\begin{proof}
Since $v_i\sim u$ for every  $u\in V_i$, $i\in \{1,...,n\}$, where $n$ is the order of $G$, we have that for every independent
dominating set $S$ it is satisfied that $v_i\in S$ if and only if
$V_i\cap S=\emptyset$.

($\Leftarrow$) Let us suppose that $G$ has a partition into
$t=d_i(H)+1$ independent sets and let $\{A_1,A_2,..., A_t\}$ be the
partition of $G$ into $t$ independent sets. Now, for every
$v_{i_j}\in A_i$, $i\in \{1,...,t\}$, let
$\{B_{i_j1},B_{i_j2},...,B_{i_j,i_j-1},B_{i_j,i_j+1},...,B_{i_jt}\}$
be an idomatic partition of $H_{i_j}$.

Let us form a partition $\Pi=\{S_1,S_2,...,S_t\}$ of $G\odot H$ such
that
$$S_i=A_i\cup\left(\bigcup_{v_l\notin A_i}B_{li}\right).$$
Thus, every $S_i\in \Pi$ is an independent dominating set in $G\odot
H$ and, a consequence, $\Pi$ is an idomatic partition in $G\odot H$
and $d_i(G\odot H)\ge t=d_i(H)+1$.

Now, let $\Pi=\{S_1,S_2,...,S_r\}$ be an idomatic partition of maximum cardinality in
$G\odot H$. Let $v_i\in V$ be a vertex of $G$. Hence, there exists
$S_l\in \Pi$ such that $v_i\in S_l$ and $S_l\cap V_i=\emptyset$.
Moreover, for every $S_j\in \Pi$, with $j\ne l$, we have that
$S_j\cap V_i\ne \emptyset$. Let $\Pi_i=\{S'_1,S'_2,...,
S'_{l-1},S'_{l+1},...,S_r\}$ the partition of $H_i$ obtained from
$\Pi$ in such a way that $S'_{j}=S_j\cap V_i$, for every $j\in
\{1,...,r\}-\{l\}$. Since every vertex of $H_i$ is not adjacent to
any vertex outside of $V_i\cup \{v_i\}$ we have that $\Pi$ is an
idomatic partition in $H_i$. Thus, we have
$$d_i(H)\ge r-1=d_{i}(G\odot H)-1.$$
Therefore, we obtain that $d_i(G\odot H)=d_i(H)+1$.

($\Rightarrow$) Let $\Pi=\{S_1,S_2,...,S_r\}$ be an idomatic partition in $G\odot H$, with $r=d_i(G\odot
H)=d_i(H)+1$. If there exists
$S_j\in \Pi$ such that $S_j\cap V=\emptyset$, then we have that for
every $S_i\in \Pi$, $S_i\cap V_j\ne \emptyset$. Since,
$\Pi_j=\{S_1\cap V_j,S_2\cap V_j,..., S_r\cap V_j\}$ is an idomatic
partition in $H_j$ we obtain a contradiction. So, for every $S_i\in
\Pi$, $S_i\cap V\ne \emptyset$. As every $S_i\in \Pi$ is an
independent dominating set we have that $\{S_1\cap V,S_2\cap V,...,
S_r\cap V\}$ is a partition of $V$ into $r=d_i(H)+1$ independent
sets.
\end{proof}

\section*{Appendix}

In this extra section we present some results which are useful into proving some of the above theorems or propositions. The girth $g(G)$ of the graph $G$ is the length of a shortest cycle contained in $G$.

\begin{lemma}\label{lema-vecinos-dist-t}
Let $G$ be a graph of minimum degree $\delta$ and let $t\ge 1$ be an integer. If $g(G)\ge 2t+1$
then, for any vertex $v\in V$
$$|M_t[v]|\ge 1+\delta(v)\displaystyle\sum_{i=0}^{t-1}(\delta-1)^i.$$
\end{lemma}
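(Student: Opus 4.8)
The plan is to run a breadth-first search from $v$ and then carry out a Moore-type counting argument, the whole of it being forced by the girth hypothesis. For $0\le i\le t$ put $N_i=\{u\in V:\ d(v,u)=i\}$, so that $M_t[v]=\bigcup_{i=0}^{t}N_i$ is a disjoint union with $|N_0|=1$ and $|N_1|=\delta(v)$. Note first that every edge of $G$ joins two vertices whose distances to $v$ differ by at most one, so inside the ball the only edges lie within a single layer $N_i$ or between consecutive layers $N_i,N_{i+1}$.

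The core step is to deduce from $g(G)\ge 2t+1$ the following ``tree-like'' behaviour of the small layers: for every $i$ with $1\le i\le t-1$, (a) each $w\in N_i$ has exactly one neighbour in $N_{i-1}$; (b) no two distinct vertices of $N_i$ are adjacent; and (c) each $x\in N_{i+1}$ has at most one neighbour in $N_i$. The cases $i=1$ are trivial because $N_0=\{v\}$. For $i\ge 2$, a violation of (a), (b) or (c) produces, inside the subgraph formed by one or two shortest paths emanating from $v$ together with one or two extra edges, two distinct paths joining the same pair of vertices; such a subgraph is therefore not a forest and contains a cycle, whose length a routine count bounds by $2i$, $2i+1$ and $2i+2$ respectively. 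Since $i\le t-1$, each of these is at most $2t<2t+1\le g(G)$, contradicting the definition of girth. I expect this to be the only delicate part of the argument: one must be sure the auxiliary walks genuinely close up into something non-tree-like (so that they really contain a cycle), and that in each subcase the cycle length stays $\le 2t$ rather than $2t+1$.

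With (a)--(c) in hand the counting is immediate. Fix $i$ with $1\le i\le t-1$. By (a) and (b) every $w\in N_i$ has at least $\delta(w)-1\ge\delta-1$ neighbours in $N_{i+1}$, so there are at least $(\delta-1)|N_i|$ edges between $N_i$ and $N_{i+1}$; by (c) each vertex of $N_{i+1}$ is incident with at most one of these edges, so their number is at most $|N_{i+1}|$. Hence $|N_{i+1}|\ge(\delta-1)|N_i|$ for all $1\le i\le t-1$, and since $|N_1|=\delta(v)$ an easy induction yields $|N_i|\ge\delta(v)(\delta-1)^{i-1}$ for $1\le i\le t$. Summing over the layers,
$$|M_t[v]|=1+\sum_{i=1}^{t}|N_i|\ \ge\ 1+\delta(v)\sum_{i=1}^{t}(\delta-1)^{i-1}\ =\ 1+\delta(v)\sum_{i=0}^{t-1}(\delta-1)^{i},$$
which is the claimed inequality. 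The degenerate ranges are checked directly and cause no trouble: when $t=1$ the list (a)--(c) is empty and the bound is just $|N[v]|=1+\delta(v)$, and when $\delta\le 1$ one has $(\delta-1)^{i-1}=0$ for $i\ge 2$, so the bound again reduces to $|M_t[v]|\ge 1+\delta(v)$.
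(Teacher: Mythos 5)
Your argument is correct and is essentially the paper's own Moore-type counting: your layer inequality $|N_{i+1}|\ge(\delta-1)|N_i|$ is exactly the increment $\sum_{u\in M_{i}[v]\setminus M_{i-1}[v]}(\delta(u)-1)$ driving the paper's induction on $t$, the only difference being that you justify explicitly, via claims (a)--(c) and the hypothesis $g(G)\ge 2t+1$, the tree-like structure of the ball that the paper uses without comment. So no changes are needed, apart from the trivial remark that for $\delta\le 1$ it is cleaner to say the right-hand side is then at most $1+\delta(v)$ (your phrase ``$(\delta-1)^{i-1}=0$'' fails when $\delta=0$, though the bound still holds trivially).
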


\begin{proof}
Let $v\in V$ be a vertex of $G$. Hence, we have that
$|M_1[v]|=1+\delta(v)$ and
\begin{align*}
|M_2[v]|&=|M_1[v]|+\sum_{u\in M_1[v]-\{v\}}(\delta(u)-1)\\
&\ge 1+\delta(v)+\sum_{u\in M_1[v]-\{v\}}(\delta-1)\\
&= 1+\delta(v)+\delta(v)(\delta-1)\\
&=1+\delta(v)\displaystyle\sum_{i=0}^{1}(\delta-1)^i
\end{align*}
Now, let us proceed by induction on $t$. Let us suppose that
$|M_{t-1}[v]|\ge
1+\delta(v)\displaystyle\sum_{i=0}^{t-2}(\delta-1)^i$, hence
\begin{align*}
|M_t[v]|&=|M_{t-1}[v]|+\sum_{u\in M_{t-1}[v]-M_{t-2}[v]}(\delta(u)-1)\\
&\ge 1+\delta(v)\displaystyle\sum_{i=0}^{t-2}(\delta-1)^i+\sum_{u\in M_{t-1}[v]-M_{t-2}[v]}(\delta(u)-1)\\
&\ge 1+\delta(v)\displaystyle\sum_{i=0}^{t-2}(\delta-1)^i+\sum_{u\in M_{t-1}[v]-M_{t-2}[v]}(\delta-1)\\
&\ge 1+\delta(v)\displaystyle\sum_{i=0}^{t-2}(\delta-1)^i+\delta(v)(\delta-1)^{t-1}\\
&= 1+\delta(v)\displaystyle\sum_{i=0}^{t-1}(\delta-1)^i
\end{align*}
\end{proof}

\begin{lemma}\label{lema-edges-dist-t}
Let $G$ be a graph of minimum degree $\delta$ and let $t\ge 1$ be an integer. If $g(G)\ge 2t+2$
then, for any edge $uv\in E$
$$|M_t[u]\cup M_t[v]|\ge\left\{\begin{array}{cc}
                          2+2\delta\displaystyle\sum_{i=1}^{t/2}(\delta-1)^{2i-1}, & \mbox{ if $\;t$ is even,} \\
                          2\delta\displaystyle\sum_{i=0}^{(t-1)/2}(\delta-1)^{2i}, & \mbox{ if $\;t$ is odd.}
                        \end{array}\right.
$$
\end{lemma}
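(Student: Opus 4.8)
The plan is to slice the set $M_t[u]\cup M_t[v]$ into distance layers around the edge $uv$ and count each layer separately. Put $d(w,\{u,v\})=\min\{d(w,u),d(w,v)\}$ and $L_j=\{w\in V:\,d(w,\{u,v\})=j\}$, so that $L_0=\{u,v\}$. Since a vertex lies in $M_t[u]\cup M_t[v]$ precisely when its distance to $\{u,v\}$ is at most $t$, we get the disjoint decomposition $M_t[u]\cup M_t[v]=\bigcup_{j=0}^{t}L_j$, hence $|M_t[u]\cup M_t[v]|=\sum_{j=0}^{t}|L_j|$. The whole argument then reduces to proving $|L_j|\ge 2(\delta-1)^j$ for $1\le j\le t$: granting this, $|M_t[u]\cup M_t[v]|\ge 2+2\sum_{j=1}^{t}(\delta-1)^j$, and a routine manipulation — writing $\delta=(\delta-1)+1$ and splitting the powers $(\delta-1)^j$ according to the parity of $j$ — identifies the right-hand side with $2+2\delta\sum_{i=1}^{t/2}(\delta-1)^{2i-1}$ when $t$ is even and with $2\delta\sum_{i=0}^{(t-1)/2}(\delta-1)^{2i}$ when $t$ is odd, which is exactly the claimed value.

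The structural input comes from $g(G)\ge 2t+2$ and is used twice, each time by exhibiting a cycle that would be too short. First, for $1\le j\le t$ every vertex of $L_j$ has exactly one neighbour in $L_{j-1}$: if $w\in L_j$ had two distinct neighbours $a,b\in L_{j-1}$, take a shortest $a$--$b$ path $Q$ inside the subgraph induced on $L_0\cup\dots\cup L_{j-1}$; such a $Q$ has length at most $2j-1$, since joining shortest paths from $a$ and from $b$ to $\{u,v\}$ (traversing the edge $uv$ if necessary) already gives a walk of that length there, and $Q$ avoids $w$ because $w\in L_j$. Then $Q$ together with the path $a\,w\,b$ yields a cycle of length at most $2j+1\le 2t+1<g(G)$, a contradiction. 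Second, for $1\le j\le t-1$ there is no edge inside $L_j$: an edge $ww'$ in $L_j$ together with shortest paths from $w$ and from $w'$ to $\{u,v\}$ gives a closed walk of length $2j+1$ or $2j+2$, from which one extracts a cycle of length at most $2j+2\le 2t<g(G)$ (using that the $i$-th vertex of a shortest path to $\{u,v\}$ lies in $L_{j-i}$, one checks the closed walk is already a simple cycle or contains a shorter one). Combining these with the remark that the neighbours of a vertex of $L_{j-1}$ lie in $L_{j-2}\cup L_{j-1}\cup L_j$, we get for $2\le j\le t$ that each $w\in L_{j-1}$ has at least $\delta(w)-1\ge\delta-1$ neighbours in $L_j$; counting the edges between $L_{j-1}$ and $L_j$ from both sides, with each vertex of $L_j$ contributing exactly one such edge, gives $|L_j|\ge(\delta-1)|L_{j-1}|$. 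For the base case $j=1$, no vertex is adjacent to both $u$ and $v$ (that would be a triangle and $g(G)\ge 4$), so $|L_1|=(\delta(u)-1)+(\delta(v)-1)\ge 2(\delta-1)$; the recursion then gives $|L_j|\ge 2(\delta-1)^j$ for all $1\le j\le t$, completing the proof.

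The point requiring the most care is the cycle-extraction in the two structural claims: concatenations of shortest paths need not be simple, so one must argue that a simple cycle of the stated length still survives, and the length bookkeeping must be exact. In particular it is the borderline case $j=t$ — where an edge inside $L_t$ would only force a cycle of length $2t+2=g(G)$, which is not excluded — that is the reason the ``no internal edge'' statement is proved only up to layer $t-1$; this causes no loss, since $L_t$ is the last layer we count, so edges inside it are irrelevant. Everything else (the layer decomposition, the double count of $L_{j-1}$--$L_j$ edges, and the parity bookkeeping matching $2+2\sum_{j=1}^{t}(\delta-1)^j$ to the two closed forms) is straightforward.
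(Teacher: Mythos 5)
Your proof is correct, and it takes a genuinely different route from the paper's. The paper deduces the lemma from the companion vertex bound (Lemma \ref{lema-vecinos-dist-t}) via inclusion--exclusion: using the girth hypothesis it identifies $M_t[u]\cap M_t[v]$ with $M_{t-1}[u]\cup M_{t-1}[v]$, telescopes the resulting alternating sum of the quantities $|M_s[u]|+|M_s[v]|$ down to $s=1$, and the two parity cases of the statement are exactly the two ways that alternating cancellation terminates. You instead grow a breadth-first layering $L_0,\dots,L_t$ from the edge $uv$, prove two structural facts from the girth bound (unique parent in $L_{j-1}$ for every vertex of $L_j$, $1\le j\le t$, and no edges inside $L_j$ for $j\le t-1$ --- correctly noting that layer $t$ need not be edge-free and need not be), and double-count the $L_{j-1}$--$L_j$ edges to get $|L_j|\ge(\delta-1)|L_{j-1}|$ with base $|L_1|\ge 2(\delta-1)$, hence $|M_t[u]\cup M_t[v]|\ge 2\sum_{j=0}^{t}(\delta-1)^j$, which indeed coincides with both displayed expressions (your parity bookkeeping checks out, so the case split in the statement is purely cosmetic). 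What each approach buys: the paper's argument recycles the already-proved vertex lemma and is short modulo that, but its alternating sum requires care (lower bounds are being substituted into negatively signed terms, which is only legitimate because of the telescoping structure); your argument is self-contained, avoids that bookkeeping entirely, makes the role of the girth hypothesis completely transparent through the two cycle-extraction claims (whose simple-cycle issue you flag and which does close, e.g.\ because the edge $ww'$ is traversed exactly once in the closed walk), and yields the cleaner unified closed form.
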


\begin{proof}
Let $uv\in E$ be an edge of $G$. By Lemma \ref{lema-vecinos-dist-t}
we have that
$$|M_t[u]|\ge 1+\delta(u)\displaystyle\sum_{i=0}^{t-1}(\delta-1)^i\ge 1+\delta\displaystyle\sum_{i=0}^{t-1}(\delta-1)^i$$
and
$$|M_t[v]|\ge 1+\delta(v)\displaystyle\sum_{i=0}^{t-1}(\delta-1)^i\ge 1+\delta\displaystyle\sum_{i=0}^{t-1}(\delta-1)^i.$$
Since $g(G)\ge 2t+2$ and $u\sim v$ we also have that
\begin{align*}
|M_t[u]\cup M_t[v]|&=|M_t[u]|+|M_t[v]|-|M_t[u]\cap M_t[v]|\\
&=|M_t[u]|+|M_t[v]|-\left(|M_{t-1}[u]|+|M_{t-1}[v]|-|M_{t-1}[u]\cap M_{t-1}[v]|\right)\\
&=|M_t[u]|+|M_t[v]|-|M_{t-1}[u]|-|M_{t-1}[v]|+\left(|M_{t-2}[u]|+|M_{t-2}[v]|-|M_{t-3}[u]\cap M_{t-3}[v]|\right)\\
&=\;\;\;\; ..........................................................\\
&=|M_t[u]|+|M_t[v]|-|M_{t-1}[u]|-|M_{t-1}[v]|+|M_{t-2}[u]|+|M_{t-2}[v]|-\; ...\\
&\hspace*{0.5cm}+(-1)^{t-2}|M_{2}[u]|+(-1)^{t-2}|M_{2}[v]|+(-1)^{t-1}|M_{1}[u]|+(-1)^{t-1}|M_{1}[v]|+(-1)^{t}2\\
&\ge
2+2\delta\displaystyle\sum_{i=0}^{t-1}(\delta-1)^i-2-2\delta\displaystyle\sum_{i=0}^{t-2}(\delta-1)^i+
2+2\delta\displaystyle\sum_{i=0}^{t-3}(\delta-1)^i-2-2\delta\displaystyle\sum_{i=0}^{t-4}(\delta-1)^i+\;...\\
&\hspace*{0.5cm}+(-1)^{t-2}\left(2+2\delta\displaystyle\sum_{i=0}^{1}(\delta-1)^i\right)+(-1)^{t-1}\left(2+2\delta\displaystyle\sum_{i=0}^{0}(\delta-1)^i\right)+
(-1)^{t}2
\end{align*}
Now, if $t$ is even, then we obtain
\begin{align*}
|M_t[u]\cup M_t[v]|&\ge 2\delta(\delta-1)^{t-1}+2\delta(\delta-1)^{t-3}+\;...\;+2\delta(\delta-1)^3+2\delta(\delta-1)+2\\
&=2+2\delta\displaystyle\sum_{i=1}^{t/2}(\delta-1)^{2i-1}.
\end{align*}
On the contrary, if $t$ is odd, then we have
\begin{align*}
|M_t[u]\cup M_t[v]|&\ge 2\delta(\delta-1)^{t-1}+2\delta(\delta-1)^{t-3}+\;...\;+2\delta(\delta-1)^2+2\delta\\
&=2\delta\displaystyle\sum_{i=0}^{(t-1)/2}(\delta-1)^{2i}.
\end{align*}
\end{proof}

\begin{theorem}\label{cota-k-par}
Let $G$ be a graph of minimum degree $\delta$ and maximum degree
$\Delta$. Let $k\ge 2$ be an integer. If $g(G)\ge k+1$ and $k$ is even, then
$$\chi_{\le k}(G)\ge 1+\Delta\displaystyle\sum_{i=0}^{\frac{k}{2}-1}(\delta-1)^i.$$
\end{theorem}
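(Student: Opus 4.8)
The plan is to exhibit, inside $G$, a set of vertices that are pairwise at distance at most $k$ and whose size is at least the claimed quantity; such a set forces that many distinct colors in any distance‑$k$ coloring, which yields the bound.

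First I would record the elementary observation that if $W\subseteq V$ is a set of vertices with $d_G(x,y)\le k$ for all distinct $x,y\in W$, then in any distance‑$k$ coloring $c$ of $G$ the colors $c(x)$, $x\in W$, are pairwise distinct, so $\chi_{\le k}(G)\ge |W|$. The point is then to choose $W$ well. Since $k$ is even, set $t=k/2\in\mathbb{Z}$, pick a vertex $v\in V$ with $\delta(v)=\Delta$, and take $W=M_t[v]$. For any two vertices $x,y\in M_t[v]$ the triangle inequality gives $d_G(x,y)\le d_G(x,v)+d_G(v,y)\le t+t=k$, so $M_t[v]$ is exactly a set of the above type, and hence $\chi_{\le k}(G)\ge |M_t[v]|$.

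It remains to bound $|M_t[v]|$ from below, and this is where the girth hypothesis enters: the assumption $g(G)\ge k+1$ is precisely $g(G)\ge 2t+1$, so Lemma~\ref{lema-vecinos-dist-t} applies with this $t$ and yields
$$|M_t[v]|\ge 1+\delta(v)\sum_{i=0}^{t-1}(\delta-1)^i=1+\Delta\sum_{i=0}^{\frac{k}{2}-1}(\delta-1)^i,$$
using $\delta(v)=\Delta$. Chaining the two displayed inequalities gives $\chi_{\le k}(G)\ge 1+\Delta\sum_{i=0}^{k/2-1}(\delta-1)^i$, as desired.

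I do not expect a genuine obstacle here; the only things to be careful about are that the parity hypothesis is exactly what makes $t=k/2$ a legitimate integer radius, and that the girth bound $g(G)\ge k+1$ matches the hypothesis $g(G)\ge 2t+1$ of Lemma~\ref{lema-vecinos-dist-t} on the nose, so no strengthening of the girth assumption is needed. (One could also remark that the same argument with $M_t[v]\cup M_t[w]$ for an edge $vw$, via Lemma~\ref{lema-edges-dist-t}, would be the natural route to the odd‑$k$ analogue, but that is not needed for the present statement.)
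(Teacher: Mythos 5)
Your argument is correct and is essentially the paper's own proof: the paper also takes the ball $A=M_{k/2}[v]$ around a vertex $v$ of maximum degree, notes that any two of its vertices are at distance at most $k$ so they receive distinct colors, and applies Lemma~\ref{lema-vecinos-dist-t} (valid since $g(G)\ge k+1=2(k/2)+1$) to bound $|A|$ from below. No differences worth noting.
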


\begin{proof}
Let us suppose $k$ is even and let $v$ be a vertex of maximum degree
in $G$. Let $A\subset V$ be the set of vertices of $G$ such that for
every $u\in A$ we have $d(u,v)\le \frac{k}{2}$. Now, since $g(G)\ge
k+1$, by Lemma \ref{lema-vecinos-dist-t} we have that $|A|\ge
1+\Delta\sum_{i=0}^{k/2-1}(\delta-1)^i$. Also, for every two
vertices $x,y\in A$ we have that $d(x,y)\le k$. So, we obtain that
$c(x)\ne c(y)$ and, as a consequence, $\chi_{\le k}(G)\ge |A|$.
Thus, the result follows.
\end{proof}

\begin{theorem}
Let $G$ be a graph of minimum degree $\delta$ and maximum degree
$\Delta$. Let $k\ge 2$ be an integer. If $g(G)\ge k+1$ and $k$ is odd, then
$$\chi_{\le k}(G)\ge\left\{\begin{array}{cc}
                          2+2\delta\displaystyle\sum_{i=1}^{(k-1)/4}(\delta-1)^{2i-1}, & \mbox{ if $\;\frac{k-1}{2}$ is even,} \\
                          2\delta\displaystyle\sum_{i=0}^{(k-3)/4}(\delta-1)^{2i}, & \mbox{ if $\;\frac{k-1}{2}$ is odd.}
                        \end{array}\right.$$
\end{theorem}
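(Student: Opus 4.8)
The plan is to mirror the proof of Theorem~\ref{cota-k-par}, but with a single vertex replaced by an edge and Lemma~\ref{lema-vecinos-dist-t} replaced by Lemma~\ref{lema-edges-dist-t}. Since $k$ is odd and $k\ge 2$ forces $k\ge 3$, put $t=\tfrac{k-1}{2}$, which is a positive integer. As $\delta\ge 1$, the graph $G$ has at least one edge; fix such an edge $uv\in E$ and consider $W=M_t[u]\cup M_t[v]$.

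First I would show that every two distinct vertices $x,y\in W$ satisfy $d(x,y)\le k$. If $x,y$ both lie in $M_t[u]$ (respectively both in $M_t[v]$), then $d(x,y)\le d(x,u)+d(u,y)\le 2t=k-1\le k$. If $x\in M_t[u]$ and $y\in M_t[v]$, then $d(x,y)\le d(x,u)+d(u,v)+d(v,y)\le t+1+t=k$. Hence in any distance-$k$ coloring $c$ of $G$ the vertices of $W$ get pairwise distinct colors, so $\chi_{\le k}(G)\ge |W|$.

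Next I would invoke Lemma~\ref{lema-edges-dist-t} with this $t$. Its hypothesis $g(G)\ge 2t+2$ is exactly the assumed bound $g(G)\ge k+1$, so the lemma applies and gives
$$|W|=|M_t[u]\cup M_t[v]|\ge
\begin{cases}
2+2\delta\sum_{i=1}^{t/2}(\delta-1)^{2i-1}, & \text{if } t \text{ is even},\\
2\delta\sum_{i=0}^{(t-1)/2}(\delta-1)^{2i}, & \text{if } t \text{ is odd}.
\end{cases}$$
Substituting $t=\tfrac{k-1}{2}$ turns $\tfrac{t}{2}$ into $\tfrac{k-1}{4}$ in the even case and $\tfrac{t-1}{2}$ into $\tfrac{k-3}{4}$ in the odd case, while the parity of $t$ is precisely the parity of $\tfrac{k-1}{2}$; chaining this with $\chi_{\le k}(G)\ge |W|$ yields exactly the claimed inequality.

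The only subtle points are bookkeeping: checking that $t=\tfrac{k-1}{2}\in\mathbb{Z}$ (this is where oddness of $k$ enters), that $2t+2=k+1$ so the girth hypothesis transfers verbatim to Lemma~\ref{lema-edges-dist-t}, and that the index substitutions land on the integers $\tfrac{k-1}{4}$ and $\tfrac{k-3}{4}$ displayed in the statement. There is no genuine analytic obstacle once Lemma~\ref{lema-edges-dist-t} is available; the argument is essentially the odd-$k$ counterpart of the even-$k$ case handled in Theorem~\ref{cota-k-par}.
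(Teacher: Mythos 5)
Your proposal is correct and follows essentially the same route as the paper: fix an edge $uv$, take the union $M_t[u]\cup M_t[v]$ with $t=\frac{k-1}{2}$, note all pairwise distances there are at most $k$, and apply Lemma~\ref{lema-edges-dist-t} under the hypothesis $g(G)\ge 2t+2=k+1$. Your write-up is in fact slightly more careful than the paper's (you verify the distance bound explicitly and check the parity/index substitutions), but there is no substantive difference.
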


\begin{proof}
Let us suppose $k$ is odd and let $uv\in E$ be an edge of $G$. Let
$B\subset V$ be the set of vertices of $G$ such that for every $x\in
B$ we have either $d(x,v)\le \frac{k-1}{2}$ or $d(x,u)\le
\frac{k-1}{2}$. Let $r=\frac{k-1}{2}$. Since, $g(G)\ge k+1$, by Lemma
\ref{lema-edges-dist-t} we obtain that
$$|A|\ge\left\{\begin{array}{cc}
                          2+2\delta\displaystyle\sum_{i=1}^{r/2}(\delta-1)^{2i-1}, & \mbox{ if $\;r$ is even,} \\
                          2\delta\displaystyle\sum_{i=0}^{(r-1)/2}(\delta-1)^{2i}, & \mbox{ if $\;r$ is odd.}
                        \end{array}\right.$$
Now, for every two vertices $a,b\in B$ we have $d(a,b)\le k$. Thus,
$c(a)\ne c(b)$ and, as a consequence, $\chi_{\le k}(G)\ge |B|$.
Therefore, the result follows.
\end{proof}

\end{document}